\numberwithin{equation}{section}
\theoremstyle{plain}
\newtheorem{theorem}{Theorem}
\newtheorem{lemma}[theorem]{Lemma}
\theoremstyle{definition}
\newtheorem{remark}[theorem]{Remark}
\newtheorem{definition}{Definition}
\newtheorem{example}[theorem]{Example}
\numberwithin{theorem}{section}
\numberwithin{definition}{section}
\newcommand{\eps}{\varepsilon}
\newcommand{\prt}{\partial}
\newcommand{\real}{\mathds{R}}
\newcommand{\rn}{{\mathds{R}^n}}
\newcommand{\comp}{\mathds{C}}
\newcommand{\I}{\mathds 1}
\def\1{1\!\!\hbox{{\rm I}}}
\renewcommand{\Re}{\mathbb{R}}
\newcommand{\be}{\begin{equation}}
\newcommand{\ee}{\end{equation}}
\newcommand{\ba}{\begin{aligned}}
\newcommand{\ea}{\end{aligned}}
\newcommand{\RE}{\mathop{\mathrm{Re}}}
\title{Intrinsic small time estimates for distribution densities of L\'evy processes}
\author{%
    \textsc{Victoria Knopova}%
    \thanks{V.M.\ Glushkov Institute of Cybernetics,
            NAS of Ukraine,
            40, Acad.\ Glushkov Ave.,
            03187, Kiev, Ukraine,
            \texttt{vic\underline{ }knopova@gmx.de}}
    \textrm{\ \ and\ \ }
    \stepcounter{footnote}\stepcounter{footnote}\stepcounter{footnote}
    \stepcounter{footnote}\stepcounter{footnote}%
    \textsc{Alexey Kulik}%
    \thanks{Institute of Mathematics, NAS of Ukraine, 3, Tereshchenkivska str., 01601  Kiev, Ukraine,
    \texttt{kulik@imath.kiev.ua}}
    }
\date{}
\begin{document}

\maketitle

\begin{abstract}
    \noindent
    We  construct intrinsic on-and off-diagonal upper and lower estimates for the transition probability density of a L\'evy process in small time.
    By intrinsic we mean that such estimates reflect the structure of the characteristic exponent of the process. The technique used in the paper relies on the asymptotic analysis of the inverse Fourier  transform of the respective characteristic function.  We provide several examples, in particular, with rather irregular L\'evy measure, to illustrate our results.

    \medskip\noindent
    \emph{Keywords:} transition probability density, transition density estimates, L\'evy process, Laplace method.

    \medskip\noindent
    \emph{MSC 2010:} Primary: 60G51. Secondary: 60J75; 41A60.
\end{abstract}

\section{Introduction}

A wide variety of asymptotic results on the small time behaviour of the transition probability density for  L\'evy and, more generally, L\'evy driven and L\'evy-type Markov processes is available in the literature. In \cite{Le87} and  \cite{Ish94} this problem was treated by means of the Malliavin calculus, see also   \cite{Pi97a}, \cite{Pi97b},  \cite{Ish01}, where the Picard's modification of the Malliavin calculus introduced in \cite{Pi96}  was used. Another approach, applicable for L\'evy-type Markov processes and   developed in  \cite{BBCK09},
 \cite{CKK11}, \cite{C09},
 \cite{BGK09},
\cite{CK03},  \cite{CK08},  \cite{CKK09}, is based essentially on  the method introduced in \cite{CKS87}, and involves the Dirichlet form technique.  We refer also to more specific papers \cite{St10a}, \cite{St10b}, \cite{RS10}, where the particular structure of tempered  stable processes is used, and to  \cite{BN00}, \cite{FO11}, \cite{FH09}, \cite{RW02} for other results on the small time asymptotic expansions. Of course, this list of publications is far from being complete.

Typically, the results available in the field involve some restrictions on the initial L\'evy noise, formulated in an explicit and prescribed form,  e.g.,  the two-sided power bounds for the truncated second moment of the L\'evy measure in \cite{Pi97a}, \cite{Pi97b}, and  \cite{Ish01}; see (\ref{orey}) below.  On the contrary, the  approach developed in the current paper is free from any assumption of such a type.  We introduce two auxiliary functions $\psi^L$ and $\psi^U$, see (\ref{psiL}) and (\ref{psiU}) below, which give natural lower and upper bounds for the real part of the characteristic exponent $\psi$ of the L\'evy process, see (\ref{psipm1}) below. Our principal assumption is that the trivial inequality $\psi^L\leq \psi^U$ is invertible, in a sense; see (\ref{A}). This assumption is satisfied for a rather wide range of L\'evy processes  including, for instance, the  well studied stable and ``$\alpha$-stable like'' processes, see Examples~\ref{exa1} and \ref{exa2}. On the other hand, this assumption may hold true as well  even when  the real part of the characteristic exponent of the process  exhibits a rather irregular asymptotic behavior at $\infty$, see Example~\ref{exa3}.

Under assumption (\ref{A}) we are able to give  two  types of small time estimates for the distribution density $p_t(x)$ of the process $Z_t$. First,  we prove the  \emph{on-diagonal estimates}, i.e. two-sided bounds for the  maximal value of the distribution density: for given $t_0>0$  there exist positive constants $c$ and $d$ such that
\be\label{unif}
c\rho_t\leq \max_{x\in \Re} p_t(x)\leq d\rho_t, \quad t\in (0, t_0],
\ee
with $\rho_t=\big(\RE\psi\big)^{-1}\big(1/t\big)$,
where
$\big(\RE\psi\big)^{-1}(x):=\inf\{\zeta>0: \RE\psi(\zeta)\geq x\}$, $x>0$,
is the quasi-inverse function of the real part $\RE\psi$ of the characteristic exponent.  Second, we give more informative, but more complicated, two-sided {\it off-diagonal estimates}. The structure of such estimates is a subject of separate discussion.  It is well known (e.g. \cite{Ish94}, \cite{Pi97a}, \cite{Pi97b},  \cite{Ish01}),  that for the transition probability density of a L\'evy driven  Markov process such a  structure, in general,  should be more complicated than a ``bell-like'' estimate, which is typical for the diffusion case. We discuss this feature in details in Section~2.2 and in Examples~\ref{exa1}, \ref{exa2} below. Motivated by this observation,  we propose a new type of non-uniform estimates,  which we call \emph{compound kernel estimates}. For both types of the estimates described above we use the term \emph{intrinsic}, because the structure of the estimates is mainly determined by the characteristic exponent and L\'evy measure of the process. In the  main results any  prescribed function, e.g. power-type, regularly varying, etc. is not involved neither in the assumptions, nor in the estimates themselves. In addition, we discuss some auxiliary assumptions which make it possible to  simplify our main estimates, and to present them  in a more compact form.

A key ingredient in our approach is the asymptotic analysis of the inverse integral Fourier transform of the characteristic function of the  initial process, which involves a proper version of the Laplace method (e.g. \cite{Co65}), and is a ``small-time counterpart'' of the asymptotic analysis developed in \cite{KK11} and \cite{K12}. For such an analysis, it is important to have the respective characteristic function  explicitly. Consequently,  the range of applications of such an approach seemingly does not exceed the class of L\'evy processes (or, slightly more general, L\'evy driven stochastic integrals with deterministic kernels). The next natural step in this research direction would be an extension of the asymptotic results obtained in this paper to L\'evy driven and L\'evy-type Markov processes using  a perturbation technique, analogous to the \emph{parametrix method} for the diffusion processes. This is a subject of a separate forthcoming paper \cite{KK12}.

To make the exposition as transparent as possible, we formulate our main estimates for   one-dimensional  L\'evy processes. Multidimensional generalization seemingly would not contain any substantially new difficulties. We postpone such a generalization to further publications.

 We now outline the rest of the paper.  In Section~\ref{Set} we introduce the notation and  the assumptions,  and formulate our main results.
Proofs are given in Section~\ref{proofs}. In Section~\ref{exa} we introduce and discuss some examples.

\section{Main results}\label{Set}
 \subsection{Notation, assumptions, and on-diagonal  bounds}  In the sequel,  $Z_t$ is a real-valued L\'evy process with the characteristic exponent $\psi$, i.e.
$$
Ee^{i \xi Z_t} = e^{-t\psi(\xi)},\quad \xi\in \Re.
$$
We assume that $Z_t$ does not contain a Gaussian component. In this case $\psi$ admits the L\'evy-Khinchin representation
\begin{equation}
    \psi(\xi)=ia\xi +\int_\real (1-e^{i\xi u}+i\xi u\I_{|u|<1})\mu(du), \label{psi}
    \end{equation}
where $a\in \Re$ and $\mu$ is a L\'evy measure, i.e. $\int_\real (1\wedge u^2)\mu(du)<\infty$.  We will treat separately the situation where  the process $Z_t$ is symmetric, which means  in terms of the characteristic exponent  that   $\psi$ is real-valued.  In what follows we assume
\be\label{mu}
\mu(\Re)=\infty,
\ee
which is clearly necessary for $Z_t$ to possess a distribution density.

Let
 \be\label{psiL}
L(x):=x^2\I_{|x|<1}, \quad  \psi^L(\xi):=\int_{\Re}L(\xi u)\mu(du)=\int_{|\xi u|<1} (\xi u)^2\mu(du),
\ee
\be\label{psiU}
U(x):=x^2\wedge 1,\quad  \psi^U(\xi):=\int_{\Re}U(\xi u)\mu(du)=\int_{\real} \left((\xi u)^2\wedge 1\right)\mu(du).
\ee

\noindent Note that  up to a constant multiplier the functions $\psi^L$ and $\psi^U$ provide, respectively, a lower and an upper bound for the real part of  the characteristic exponent
$$
\RE \psi(\xi)=\int_\Re\Big(1-\cos(\xi u)\Big)\mu(du).
$$
Indeed,  from the elementary inequalities
   $$
    (1-\cos 1) \leq \frac{1-\cos x}{x^2} \leq \frac{1}{2}, \quad |x|<1,\quad \text{and } \quad  0\leq 1-\cos x\leq 2, \quad x\in \Re,
    $$
 we have
    \begin{equation}
    (1-\cos 1) \psi^L(\xi)\leq \RE\psi(\xi) \leq 2 \psi^U(\xi), \quad \xi\in \Re. \label{psipm1}
    \end{equation}

Our main assumption  formulated  below states that  these bounds are in a sense exact.

\medskip

\begin{itemize}

\item[\textbf{A.}] There exists some $\beta>1$ such that
\be\label{A}\psi^U(\xi)\leq \beta\psi^L(\xi), \quad \xi\in \Re.
\ee

\end{itemize}

 For  $t> 0$, we  put
\begin{equation}
    \rho_t :=\big(\RE\psi\big)^{-1}\left(1/t\right)=\inf\{ \xi> 0:  \RE\psi(\xi)\geq 1/t\}.\label{rho}
\end{equation}
Note that under condition \textbf{A}  the set in the right hand side of \eqref{rho} is not empty (cf.  Lemma~\ref{C1} below).

 \begin{theorem}\label{t-main} Suppose that condition \textbf{A}   is satisfied.  Then for every $t>0$ the distribution  density $p_t$ of $Z_t$ exists, is infinitely differentiable, and  for every $t_0>0$ there exist  positive constants $c\equiv c(t_0), d\equiv d(t_0)$ such that (\ref{unif}) holds true.

\end{theorem}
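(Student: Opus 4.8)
The plan is to prove existence and smoothness of $p_t$ via absolute integrability of the characteristic function $e^{-t\psi}$, and then to obtain the two-sided bound (\ref{unif}) by carefully controlling the Fourier integral $p_t(x)=\frac{1}{2\pi}\int_\Re e^{-i\xi x}e^{-t\psi(\xi)}\,d\xi$ near $x=0$, where the maximum of $p_t$ is essentially attained. The key quantitative tool is the relation between $\rho_t$ and $\RE\psi$ together with assumption \textbf{A}, which should force $\RE\psi$ to grow like a ``regularly varying'' function of controlled order; in particular, I expect a lemma (the ``$\mathrm{C1}$'' lemma alluded to after (\ref{rho})) saying that under \textbf{A} the function $\RE\psi$ is unbounded, continuous, and satisfies a doubling-type estimate $\RE\psi(2\xi)\le C\,\RE\psi(\xi)$, so that $\rho_t$ is well-defined, finite, and $\rho_t\to\infty$ as $t\to 0$.

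First I would establish the \emph{upper bound}. Starting from $|p_t(x)|\le\frac{1}{2\pi}\int_\Re e^{-t\RE\psi(\xi)}\,d\xi$, split the integral at $|\xi|=\rho_t$. On $|\xi|\le\rho_t$ one simply bounds the integrand by $1$, contributing $O(\rho_t)$. On $|\xi|>\rho_t$ one uses \textbf{A} via $\psi^U\le\beta\psi^L$: the point is that $\psi^L$ and $\psi^U$ are comparable, and $\psi^U$ is ``almost increasing'' and subadditive-type in a way that gives, for $|\xi|=k\rho_t$, a lower bound $\RE\psi(\xi)\gtrsim (k/\beta)\cdot(1/t)$ or at least $\RE\psi(\xi)\gtrsim \log(k)\cdot(1/t)$ — some superlinear-in-$k$ growth suffices — which makes $\int_{|\xi|>\rho_t} e^{-t\RE\psi(\xi)}\,d\xi = O(\rho_t)$. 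Hence $\max_x p_t(x)\le\|p_t\|_\infty\le d\rho_t$. The comparability of $\psi^L$ and $\psi^U$ (the doubling estimate) is exactly what turns the trivial bound $\int_{|\xi|\le\rho_t}1\,d\xi=2\rho_t$ into a sharp one; this is the heart of the upper estimate.

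For the \emph{lower bound} I would show $p_t(0)\ge c\rho_t$, which then gives $\max_x p_t(x)\ge c\rho_t$. Write $p_t(0)=\frac{1}{2\pi}\int_\Re e^{-t\psi(\xi)}\,d\xi$. In the symmetric case $e^{-t\psi(\xi)}=e^{-t\RE\psi(\xi)}\ge e^{-1}$ on $|\xi|\le\rho_t$ by definition of $\rho_t$, so $p_t(0)\ge\frac{1}{2\pi}e^{-1}\cdot 2\rho_t = c\rho_t$. In the non-symmetric case one must handle the imaginary part $\RE\bigl(e^{-t\psi(\xi)}\bigr)=e^{-t\RE\psi(\xi)}\cos\bigl(t\,\mathrm{Im}\,\psi(\xi)\bigr)$; here I would restrict to $|\xi|\le\eps\rho_t$ for small $\eps$, show that $t\,|\mathrm{Im}\,\psi(\xi)|$ stays bounded by, say, $\pi/3$ there (using that $|\mathrm{Im}\,\psi|$ is controlled by $\RE\psi$ up to the drift — the drift term $ia\xi$ is absorbed by centering, or handled by noting $|a\xi|\le |a|\eps\rho_t$ and that $\rho_t\RE\psi(\rho_t)^{-1}$-type quantities are tame), and then $p_t(0)\ge\frac{1}{2\pi}\int_{|\xi|\le\eps\rho_t}e^{-1}\cdot\frac12\,d\xi=c\,\eps\rho_t$.

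The main obstacle will be the non-symmetric case: controlling $\mathrm{Im}\,\psi$ and the linear drift $a\xi$ uniformly for $t\in(0,t_0]$ on the relevant frequency window, since $\mathrm{Im}\,\psi$ need not be small compared to $\RE\psi$ globally. I expect this is resolved by combining assumption \textbf{A} with the elementary bound $|\mathrm{Im}\,\psi(\xi)-a\xi|=\bigl|\int(\sin\xi u-\xi u\I_{|u|<1})\mu(du)\bigr|\lesssim\psi^U(\xi)\lesssim\RE\psi(\xi)$ and a shift $x_t$ of the observation point so that the oscillation is killed on $|\xi|\le\eps\rho_t$; this shift does not affect $\max_x p_t$. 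Aside from that, the remaining work — verifying the doubling property of $\RE\psi$ under \textbf{A}, and that $\rho_t<\infty$ with $\rho_t\to\infty$ — is the content of the preliminary lemmas and is routine given \textbf{A}.
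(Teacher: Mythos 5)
Your upper-bound argument is correct, and more elementary than the paper's (which first decomposes $Z_t$ at the jump threshold $1/\rho_t$ into a small-jump part, a compound-Poisson part and a shift, proves exponential decay of the small-jump density $\bar p_t$ by a contour shift, and then reads off $\sup p_t\le\sup\bar p_t$). Your direct split of $\int_{\real}e^{-t\RE\psi}\,d\xi$ at $|\xi|=\rho_t$ works, but note the ingredient needed on $|\xi|>\rho_t$ is the \emph{power lower growth} of Lemma~\ref{C1}, namely $\psi^U(\xi_2)\ge(\xi_2/\xi_1)^{2/\beta}\psi^U(\xi_1)$, hence $t\,\RE\psi(k\rho_t)\gtrsim k^{2/\beta}$. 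A doubling estimate $\RE\psi(2\xi)\le C\,\RE\psi(\xi)$ points in the opposite direction and does not help, and a bound of the form ``$\gtrsim\log(k)/t$'' with an uncontrolled constant would not make $\int_{|\xi|>\rho_t}e^{-t\RE\psi}\,d\xi=O(\rho_t)$.

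Your lower bound, however, has a genuine gap in the non-symmetric case, on two counts. (i) The claimed inequality $|\mathrm{Im}\,\psi(\xi)-a\xi|\lesssim\psi^U(\xi)$ is false in general: the contribution of $\{|\xi u|\ge1,\ |u|<1\}$ to $\bigl|\int(\sin\xi u-\xi u\,\I_{|u|<1})\mu(du)\bigr|$ is of order $\int_{1/|\xi|\le|u|<1}|\xi u|\,\mu(du)$, which for $|u|^{-1-\alpha}$-type tails with $\alpha\le1$ is of order $|\xi|$ (or $|\xi|\log|\xi|$ at $\alpha=1$), whereas $\psi^U(\xi)\asymp|\xi|^\alpha$. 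One can salvage $t\,|\mathrm{Im}\,\psi(\xi)-c\xi|\le C(\beta)$ on $|\xi|\le\rho_t$ with the $t$-dependent centering $c=a+\int_{1/\rho_t<|u|<1}u\,\mu(du)$ (the paper's $a_t/t$), but $C(\beta)$ need not lie below $\pi/2$, so the sign of $\cos(\cdot)$ is not controlled. (ii) More fundamentally, the step $p_t(x_t)\ge\frac{1}{2\pi}\int_{|\xi|\le\eps\rho_t}e^{-t\RE\psi}\cdot\frac12\,d\xi$ simply discards the contribution from $|\xi|>\eps\rho_t$, where $\cos(\cdot)$ may be negative; that piece can be of size up to $\int_{\eps\rho_t<|\xi|\le\rho_t}e^{-t\RE\psi}\,d\xi\approx 2(1-\eps)\rho_t$, which swamps the retained $\asymp\eps\rho_t$. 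Thus your lower bound goes through only in the symmetric case, where the Fourier integrand is nonnegative. The paper avoids the oscillation entirely: from the exponential off-diagonal bound on $\bar p_t$ and the identity $p_t=\bar p_t*P_t*\delta_{-a_t}$ (with $P_t$ a probability measure carrying an atom of uniformly positive mass at the origin), it shows a fixed fraction of the mass of $p_t$ lies in an interval of length $O(1/\rho_t)$, and then $\sup_x p_t\gtrsim\rho_t$ follows by comparing mass with sup. A direct Fourier proof of the lower bound would require uniform control of $t\,\mathrm{Im}\,\psi$ on the whole relevant frequency range, which condition \textbf{A} alone does not supply.
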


\emph{Remark.} We will see in the proof below that $p_t(\cdot)$ vanishes at $\pm\infty$, which means that the maximum in  (\ref{unif}) is well defined.

\subsection{Off-diagonal  bounds}

Apart from the on-diagonal bounds of the form (\ref{unif}), one is typically  interested in  the off-diagonal bounds, which would control the behavior of $p_t(x)$ w.r.t. the space variable $x$. Below we propose a new type of such  bounds, which we call \emph{compound kernel estimates}. Let us discuss this topic in details.

A typical form of an upper estimate  one may think about, is the ``bell-like'' estimate
\be\label{bell}p_t(x)\leq f_t(x)
\ee
with some kernel $f_t(x)$ which, when considered as a function of $x$ for a fixed $t>0$, is symmetric on $\Re$ and decreasing  on $\Re^+$. In addition, one would expect  the kernel $f_t(x)$
to have the form
\be\label{bell_scale} f_t(x)= \sigma_t f(\sigma_t x),
\ee
where the ``shape function''  $f(x)$ is re-scaled by some   ``scale function'' $\sigma_t$. Typical examples which give  a strong motivation for
the estimate (\ref{bell}) with $f_t(x)$ of the form (\ref{bell_scale}), are the Brownian motion and, more generally, the $\alpha$-stable process. In this case,   \eqref{bell} and \eqref{bell_scale} hold true with $\sigma_t=1/\sqrt{t}$ and $\sigma_t=t^{-1/\alpha}$, respectively. However, for more general L\'evy processes it may happen that any upper estimate of the form (\ref{bell}) with $f_t(x)$ as in  (\ref{bell_scale}), is very inexact.  To see that, we recall the papers
\cite{Ish94}, \cite{Pi97a}, \cite{Pi97b},  \cite{Ish01}, where the pointwise asymptotic behavior of the transition probability densities of  L\'evy driven Markov processes is studied. In our setting, the results from  \cite{Ish94}, \cite{Pi97a}, \cite{Pi97b},  \cite{Ish01} imply  that for some (not necessarily all) points $x\in \Re$ and for every $t_0>0$ there exist positive constants $c=c(x,t_0),d=d(x, t_0)$  such that
$$
ct^{\gamma(x)}\leq p_t(x)\leq d t^{\gamma(x)}, \quad t\in (0,t_0].
$$
Here the power $\gamma(x)$ may vary from point to point, which does not correspond well to the ``bell-like'' estimates (\ref{bell}), (\ref{bell_scale}). In Example \ref{exa2} below we extend this observation and give an example of the L\'evy process such that, if  one constructs any estimate of the form (\ref{bell}) with   the function $f_t(x)$ monotonous on $\Re^+$ with respect to  $x$, then this function $f_t(x)$ is necessarily non-integrable on $\Re$ for every $t>0$.
Since
$$
\int_\Re p_t(x)\, dx=1, \quad t>0,
$$
we see that  for this L\'evy process any estimate of the form (\ref{bell}) with a ``bell-like function'' $f_t(x)$ (not necessarily of the form (\ref{bell_scale})) is extremely inexact. Note that in Example \ref{exa2}, as well as in  \cite{Ish94}, \cite{Pi97a}, \cite{Pi97b},  \cite{Ish01}, the L\'evy measure of the noise is \emph{$\alpha$-stable like} in the sense that
\be\label{orey}
c\eps^{2-\alpha}\leq \int_{|u|\leq\eps} u^2\mu(du)\leq d\eps^{2-\alpha}, \quad \eps\in (0,1].
\ee
This means that even in the  case, which is very close to the $\alpha$-stable one, in general one can not expect  to obtain a sufficiently exact upper estimate of the form (\ref{bell}) with a ``bell-like function'' $f_t(x)$ without further restrictions on the L\'evy measure of the noise.

To explain the proper structure of an upper estimate, we represent $Z_t$ as the sum
\be\label{decomp}
Z_t=\bar Z_t+\hat Z_t-a_t,
\ee
where $\bar Z_t$ and $\hat Z_t$ are  independent,
 $\bar Z_t$ possessing for each $t>0$  the  infinitely divisible distribution with the characteristic exponent
\begin{equation}
    \psi_t(\xi)=t\int_{|\rho_t u|\leq 1} (1-e^{i\xi u}+i\xi u)\mu(du), \label{psit}
    \end{equation}
$\hat Z_t$ having the compound Poisson distribution with the intensity measure
\begin{equation}
\Lambda_t(du)=t  \mu(du)1_{| \rho_t u|>1}, \label{lam}
\end{equation}
and $$
a_t=t\left(a+\int_{1/\rho_t<|u|<1}u\, \mu(du)\right),
$$
 where $\rho_t$ is defined in (\ref{rho}).  We  show below that if  condition \textbf{A} is satisfied, then  $\bar Z_t$ has a distribution density $\bar p_t(x)$ which satisfies (\ref{bell}) and  (\ref{bell_scale}) with $\sigma_t=\rho_t$. On the other hand,
\begin{equation}
p_t(x)=(\bar p_t* P_t*\delta_{-a_t})(x)\label{conv1}
\end{equation}
with
\begin{equation}
    P_t(dy)=e^{-\Lambda_t(\real)} \sum_{m=0}^\infty \frac{1}{m!} \Lambda_t^{*m}(dy),\label{Poist}
    \end{equation}
where $\Lambda_t^{*m}$ denotes the $m$-fold convolution power of the measure $\Lambda_t$, and $\Lambda_t^{*0}$ is equal to the $\delta$-measure at $0$. This observation leads to an upper estimate for  $p_t(x+a_t)$ in the form of the series of convolutions of the single kernel function $\bar f_t(x)$ involved into  (\ref{bell}) with  convolution powers of the measure $\Lambda_t$. The above  construction motivates the following general definition.

\begin{definition}
Let $\sigma, \zeta: (0, \infty)\to \Re$, $h:\Re\to \Re$ be some functions, and $(Q_t)_{t\geq 0}$ be a family of finite measures on the Borel $\sigma$-algebra in $\Re$.  We say that a real-valued function $g$ defined on a set $A\subset (0, \infty)\times \Re$ satisfies the \emph{upper compound kernel estimate} with parameters $(\sigma_t, h, \zeta_t, Q_t)$, if
\be\label{upperCCE}
g_t(x)\leq\sum_{m=0}{1\over m!}\int_{\Re}\sigma_t  h((x-y)\zeta_t)Q_t^{*m}(dy), \quad (t,x)\in A.
\ee
 If the analogue of \eqref{upperCCE} holds true with the sign $\geq$ instead of $\leq$, then we say that the function $g$ satisfies the \emph{lower  compound kernel estimate} with parameters $(\sigma_t, h, \zeta_t, Q_t)$.
\end{definition}
In the above definition, the function $h$ heuristically  determines the shape of a ``single principal kernel'' involved in the estimate, which is re-scaled by the functions $\sigma_t$ and $\zeta_t$ and then extended in a ``compound Poisson'' like way by the family of  ``transition measures'' $Q_t$, $t>0$.

Now we are ready to formulate the main result of this section. Denote
$$
x_t:=\min\left\{x:\quad \bar p_t(x)=\max_{x'\in \Re}\bar p_t(x')\right\}.
$$
Below we will see that $x_t$ is well defined, and for every $t_0>0$ there exists $L=L(t_0)$ such that
$$
|x_t|\leq L/\rho_t,\quad  t\in (0, t_0].
$$

 \begin{theorem}\label{t-main1} Let condition \textbf{A}   hold true.  Then for every $t_0>0$ there exist constants $b_1, b_2, b_3, b_4$, such that the following holds true:
 \begin{itemize}\item[I.]   The function
$$
 p_t(x+a_t), \quad (t, x)\in (0, t_0]\times \Re,
$$
satisfies the upper compound kernel estimate with parameters $(\rho_t, f_{upper}, \rho_t, \Lambda_t)$, where
\begin{equation}
f_{upper}(x)=b_1 e^{-b_2|x|}.\label{fup}
\end{equation}

\item[II.]   The function
$$
 p_t(x+a_t-x_t), \quad (t, x)\in (0, t_0]\times \Re,
$$
satisfies the lower compound kernel estimate with parameters $(\rho_t, f_{lower}, \rho_t, \Lambda_t)$, where
\begin{equation}
f_{lower}(x)=b_3 \I_{|x|\leq b_4}.\label{flower}
\end{equation}

 \end{itemize}
\end{theorem}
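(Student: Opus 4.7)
The plan is to reduce the estimates for $p_t$ to analogous kernel estimates for the density $\bar p_t$ of the small-jumps component $\bar Z_t$, via the convolution representation \eqref{conv1}--\eqref{Poist}. A first preliminary point is that under Assumption~\textbf{A} the total mass $\Lambda_t(\Re)$ is uniformly bounded on $(0,t_0]$: since $\psi^U(\xi)-\psi^L(\xi)=\mu\{|u|\geq 1/\xi\}$, Assumption~\textbf{A} together with \eqref{psipm1} and the definition \eqref{rho} of $\rho_t$ gives $\Lambda_t(\Re)=t\mu\{|u|>1/\rho_t\}\leq Ct\,\RE\psi(\rho_t)\leq C$. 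Consequently $e^{-\Lambda_t(\Re)}$ stays between two positive constants on $(0,t_0]$, which will produce the multiplicative constant $b_3$ in Part~II.

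The heart of the proof is the following pair of auxiliary estimates for $\bar p_t$:
\[
\bar p_t(x)\leq c_1\rho_t\exp\bigl(-c_2\rho_t|x|\bigr),\qquad x\in\Re,
\]
together with $\bar p_t(x_t+y)\geq c_3\rho_t$ for $|y|\leq c_4/\rho_t$, and $|x_t|\leq L/\rho_t$. Since the L\'evy measure of $\bar Z_t$ is supported in $[-1/\rho_t,1/\rho_t]$, the exponent $\psi_t$ in \eqref{psit} extends to an entire function and the Fourier representation $\bar p_t(x)=(2\pi)^{-1}\int_\Re e^{-ix\xi-\psi_t(\xi)}d\xi$ admits a contour shift $\xi\mapsto\xi-i\eta_0\rho_t\,\mathrm{sgn}(x)$. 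The shift produces the factor $e^{-\eta_0\rho_t|x|}$ outside the integral; the delicate point is keeping the integrand integrable. A Taylor expansion of the integrand of $\psi_t$ in the imaginary direction gives, for $|\eta|\leq\eta_0\rho_t$ with $\eta_0$ small,
\[
\RE\psi_t(\xi+i\eta)\geq (1-\eta_0)\RE\psi_t(\xi)-C\eta^2\cdot t\!\int_{|\rho_t u|\leq 1}\!u^2\mu(du),
\]
and by \eqref{A} and \eqref{rho} the factor $t\int_{|\rho_t u|\leq 1}u^2\mu(du)=t\psi^L(\rho_t)/\rho_t^2$ is comparable to $\rho_t^{-2}$, so the correction term is bounded by a constant. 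Combined with the integrability of $e^{-\RE\psi_t(\xi)}$ and the bound $\int_\Re e^{-\RE\psi_t(\xi)}d\xi\leq C\rho_t$ (obtained as in the proof of Theorem~\ref{t-main}), this yields the exponential upper bound. The lower bound near the maximum follows from $\max_x\bar p_t(x)\asymp\rho_t$ (an on-diagonal estimate analogous to Theorem~\ref{t-main}) combined with the derivative bound $|\bar p_t'(x)|\leq(2\pi)^{-1}\int|\xi|e^{-\RE\psi_t(\xi)}d\xi\leq C\rho_t^2$: Lipschitz continuity at scale $1/\rho_t$ prevents $\bar p_t$ from dropping below $c_3\rho_t$ on a neighborhood of width $c_4/\rho_t$ around $x_t$, and the location bound $|x_t|\leq L/\rho_t$ is then forced by the exponential upper estimate.

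With these ingredients in place, Theorem~\ref{t-main1} follows by unfolding \eqref{conv1}--\eqref{Poist}. For Part~I,
\[
p_t(x+a_t)=\sum_{m=0}^\infty\frac{e^{-\Lambda_t(\Re)}}{m!}\int_\Re\bar p_t(x-y)\,\Lambda_t^{*m}(dy);
\]
inserting the upper bound on $\bar p_t$ and using $e^{-\Lambda_t(\Re)}\leq 1$ gives the upper compound kernel estimate with $f_{upper}$ as in \eqref{fup}, $b_1=c_1$, $b_2=c_2$. For Part~II, the analogous expansion of $p_t(x+a_t-x_t)$ lets the neighborhood estimate $\bar p_t(x_t+\cdot)\geq c_3\rho_t$ on $[-c_4/\rho_t,c_4/\rho_t]$ take the role of $f_{lower}$ in \eqref{flower}, while the lower bound on $e^{-\Lambda_t(\Re)}$ produces the positive constant $b_3$. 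The principal obstacle is the Fourier contour-shift argument for $\bar p_t$: the shift size $\eta_0\rho_t$ diverges as $t\to 0$, and Assumption~\textbf{A} is precisely what provides the uniform-in-$t$ comparison between the second-moment scale of the truncated L\'evy measure and the growth of $\RE\psi_t$ that is needed to justify the shift.
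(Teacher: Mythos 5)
Your proposal is correct and follows essentially the same route as the paper: decompose $Z_t$ as in \eqref{decomp}, prove the exponential upper bound for $\bar p_t$ by a complex contour shift at height $O(\rho_t)$, derive the flat lower bound near $x_t$ from the on-diagonal estimate together with the Lipschitz bound $|\bar p_t'|\lesssim\rho_t^2$, and then unfold the convolution \eqref{conv1}--\eqref{Poist}. The only cosmetic difference is inside the contour estimate: the paper bounds the factor $e^{-s\eta u}$ below by $e^{-1}$ and the term $1-\eta u-e^{-\eta u}$ via the constant $\sup_{|x|\le1}(1-x-e^{-x})/x^2$, whereas you use a Taylor-type inequality producing the $(1-\eta_0)$ factor and an explicit $\eta^2 t\int u^2\,\mu(du)$ correction; both are controlled by \textbf{A} via $t\psi^L(\rho_t)\lesssim1$ and lead to the same conclusion.
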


Under some additional restrictions on the L\'evy measure $\mu$, an upper compound kernel estimate from Theorem \ref{t-main1} may lead to the ``bell-like'' estimate of the form (\ref{bell}). We formulate two statements of such a type in Theorem \ref{Klup} below. However, as we have mentioned it  in the discussion above, any statement of such a kind would necessarily be more sensitive with respect to the structure of the L\'evy measure of the noise than  that of  Theorem \ref{t-main1}, which involves {\itshape only} our main condition \textbf{A}. This is the reason for us to focus on the   compound kernel estimates. To finalize the discussion on that subject, let us mention that compound kernel estimates preserve some principal features which  the ``bell-like'' estimates exhibit  in the diffusion case. In particular, it is possible to extend compound kernel estimates, obtained in Theorem \ref{t-main1} for L\'evy processes, to the L\'evy-type setting using an analogue of the \emph{parametrix method},  see the forthcoming paper \cite{KK12}. In the same paper we give  some applications of the compound kernel estimates to the investigation of local times for  L\'evy-type Markov processes.

\subsection{Extensions and corollaries}

In this section we formulate some extensions and corollaries of the above results. The first extension gives  compound kernel estimates for the derivatives of the distribution density $p_t(x)$.

 \begin{theorem}\label{aux-theo}
 Let condition \textbf{A}   hold true.
Then for every $k\geq 1$, $t_0>0$,  there exist constants $b_1, b_2$, such that the function
$$
\left|{\partial^k\over \prt x^k} p_t(x+a_t) \right|, \quad (t, x)\in (0, t_0]\times \real,
$$
satisfies the upper compound kernel estimate with parameters  $(\rho_t^{k+1}, f_{upper}, \rho_t, \Lambda_t)$,  where the function $f_{upper}$ is defined in (\ref{fup}).

\end{theorem}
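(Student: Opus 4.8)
The plan is to piggy-back on the proof of Theorem \ref{t-main1}(I), differentiating under the integral sign in the inverse Fourier representation of $\bar p_t$ and then re-running the convolution argument. Recall the decomposition \eqref{decomp}: $Z_t=\bar Z_t+\hat Z_t-a_t$, and the convolution identity \eqref{conv1}, which gives $p_t(x+a_t)=(\bar p_t*P_t)(x)$ with $P_t$ as in \eqref{Poist}. Differentiating $k$ times in $x$ moves all derivatives onto the smooth factor $\bar p_t$, so
\[
{\partial^k\over\partial x^k}p_t(x+a_t)=\big((\bar p_t)^{(k)}*P_t\big)(x)
=e^{-\Lambda_t(\real)}\sum_{m=0}^\infty{1\over m!}\int_\Re (\bar p_t)^{(k)}(x-y)\,\Lambda_t^{*m}(dy).
\]
Hence everything reduces to an exponential-type pointwise bound on $(\bar p_t)^{(k)}$: if I can show
\be\label{derbar}
\big|(\bar p_t)^{(k)}(x)\big|\leq b_1\rho_t^{k+1}e^{-b_2\rho_t|x|},\quad (t,x)\in(0,t_0]\times\Re,
\ee
then plugging \eqref{derbar} into the series above yields exactly the upper compound kernel estimate with parameters $(\rho_t^{k+1},f_{upper},\rho_t,\Lambda_t)$, since $e^{-\Lambda_t(\real)}\Lambda_t^{*m}/m!$ are the weights of $P_t$ and $\Lambda_t^{*m}\leq m!\,e^{\Lambda_t(\real)}(\text{summand in }Q_t^{*m})$ — more precisely the constant $e^{-\Lambda_t(\real)}\leq 1$ is absorbed into $b_1$.

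To prove \eqref{derbar} I would start from the Fourier inversion formula $\bar p_t(x)=\frac1{2\pi}\int_\Re e^{-i\xi x}e^{-\psi_t(\xi)}\,d\xi$, where $\psi_t$ is given in \eqref{psit}. In the proof of Theorem \ref{t-main1} one establishes (via condition \textbf{A} and the Laplace-method analysis referenced in the introduction) that $\RE\psi_t(\xi)$ grows at least quadratically near $0$ and at least like a positive multiple of $t\,\RE\psi(\xi)\gtrsim$ (something unbounded) for large $\xi$, giving integrability of $e^{-\psi_t}$ together with all its polynomial weights $\xi^k e^{-\psi_t(\xi)}$; moreover the rescaled exponent $\psi_t(\xi/\rho_t)$ is comparable, uniformly in $t\in(0,t_0]$, to a fixed profile, which is what produces the scaling $\sigma_t=\rho_t$. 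Differentiating under the integral sign $k$ times brings down a factor $(-i\xi)^k$, so
\[
(\bar p_t)^{(k)}(x)={1\over 2\pi}\int_\Re(-i\xi)^k e^{-i\xi x}e^{-\psi_t(\xi)}\,d\xi,
\]
and the exponential spatial decay in \eqref{derbar} is obtained, as usual, by shifting the contour of integration into the complex plane (or, equivalently, by the standard argument that $\bar p_t$ extends analytically to a strip, because $\mu$ restricted to $|\rho_t u|\le1$ is compactly supported, so $\psi_t$ is entire of at most quadratic growth on horizontal lines). After the change of variables $\xi\mapsto\xi/\rho_t$ the $\xi^k$ factor yields the prefactor $\rho_t^{k+1}$ (one $\rho_t$ from $d\xi$, $k$ from the monomial), and the uniform comparability of the exponent gives the uniform constants $b_1,b_2$ depending only on $t_0$ and $k$. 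This is essentially the same computation already carried out for $k=0$ in establishing \eqref{bell}–\eqref{bell_scale} for $\bar p_t$, with an extra polynomial weight that is harmless.

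The main obstacle is making the contour shift (or the strip-analyticity estimate) quantitative and \emph{uniform in $t$}: one needs that the width of the analyticity strip for $x\mapsto\bar p_t(\rho_t^{-1}x)$ and the decay rate $b_2$ can be chosen independent of $t\in(0,t_0]$, which requires controlling $\RE\big(\psi_t((\xi+i\eta)/\rho_t)\big)$ from below uniformly for $\eta$ in a fixed interval. This is exactly the place where condition \textbf{A} and the small-time Laplace-type analysis from the proof of Theorem \ref{t-main1} are used, and I would simply invoke the corresponding estimates established there rather than redo them; the only genuinely new point is tracking the additional $|\xi|^k$ factor through those bounds, which changes the power of $\rho_t$ from $1$ to $k+1$ but not the exponential shape function $f_{upper}$. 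Once \eqref{derbar} is in hand, assembling the compound kernel estimate is immediate from \eqref{conv1}–\eqref{Poist} as indicated above.
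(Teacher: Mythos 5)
Your proposal is correct and follows essentially the same route as the paper: the paper proves Theorem~\ref{aux-theo} by combining the convolution identity \eqref{conv1} (which puts all $x$-derivatives onto the smooth factor $\bar p_t$) with Lemma~\ref{ptbar}, whose statement is exactly your inequality \eqref{derbar}, obtained there via Fourier inversion \eqref{rep} followed by a contour shift to $\Re+i\eta$ with $|\eta|\le\rho_t$ and the condition-\textbf{A}-driven estimate of Lemma~\ref{l-aux2}. The only small inaccuracy is the loose phrase ``$\psi_t$ is entire of at most quadratic growth on horizontal lines'': what the argument actually needs is a lower bound on $\RE\psi_t$ along horizontal lines (to kill the $\Gamma^M_{3,4}$ pieces and to make $\int|\xi|^ke^{-\RE\psi_t}$ finite), not a growth bound on $\psi_t$ itself; but the mechanism you describe — analytic extension to a $t$-uniform strip of width $\asymp\rho_t$ because the truncated jump sizes are bounded by $1/\rho_t$, then reading off the prefactor $\rho_t^{k+1}$ from the monomial $\xi^k$ and the change of variables — is precisely what the paper does.
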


The second extension gives more precise versions of Theorems \ref{t-main1} and \ref{aux-theo}  in the case where the process $Z_t$ is symmetric.

 \begin{theorem}\label{aux-theo-sym}  Let the process $Z_t$ be symmetric and condition \textbf{A}   hold true.
Then the statements of Theorem \ref{t-main1} and Theorem \ref{aux-theo} hold with $a_t$ and $x_t$ replaced by zero, and $f_{upper}$ defined by
\begin{equation}
f_{upper}(x)=b_1 e^{-b_2|x|\ln (1+|x|)} \label{fup-sym}
\end{equation}
instead of (\ref{fup}).
\end{theorem}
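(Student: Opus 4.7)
The plan is to upgrade the conclusions of Theorems~\ref{t-main1} and \ref{aux-theo} to the symmetric setting using two features of symmetry: the automatic vanishing of the centering terms $a_t$ and $x_t$, and a stronger contour shift in the Fourier inversion for $\bar p_t$ that turns the exponential decay in (\ref{fup}) into the Bennett-type rate in (\ref{fup-sym}).

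First I would verify that $\psi$ being real-valued is equivalent to $a=0$ together with $\mu$ being symmetric (equate imaginary parts in (\ref{psi})). Both integrals defining $a_t$ then vanish by parity, so $a_t\equiv 0$. Since $\bar Z_t$ has characteristic exponent $\psi_t$ and $\psi_t$ is still real-valued (the truncated L\'evy measure remains symmetric), $\bar p_t$ is symmetric on $\Re$ and by Fourier inversion $\bar p_t(x)=\frac{1}{2\pi}\int\cos(\xi x)e^{-t\psi_t(\xi)}\,d\xi\leq \bar p_t(0)$, so $x_t=0$ is admissible. The lower compound kernel estimate of Theorem~\ref{t-main1}.II and the derivative estimates of Theorem~\ref{aux-theo} then transfer verbatim with $a_t$ and $x_t$ replaced by zero.

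The heart of the matter is upgrading the pointwise bound on $\bar p_t$ (and its derivatives) from $\rho_t e^{-b|\rho_t x|}$ to $\rho_t e^{-b|\rho_t x|\ln(1+|\rho_t x|)}$. Since $\bar Z_t$ has jumps bounded by $1/\rho_t$, $\psi_t$ extends to an entire function on $\comp$, and for $x>0$, $\eta_0>0$, shifting the contour gives
\[
\bar p_t(x)=\frac{e^{-\eta_0 x}}{2\pi}\int_{\Re}e^{-i\sigma x}e^{-t\psi_t(\sigma-i\eta_0)}\,d\sigma.
\]
By symmetry of $\mu$, $\RE\psi_t(\sigma-i\eta_0)=\int(1-\cos(\sigma u)\cosh(\eta_0 u))\mu(du)\ge \RE\psi_t(\sigma)-\int(\cosh(\eta_0 u)-1)\mu(du)$. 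Using $\cosh(y)-1\leq\tfrac12 y^2 e^{|y|}$ on $|u|\leq 1/\rho_t$, together with (\ref{psipm1}), condition~\textbf{A}, and $t\RE\psi(\rho_t)=1$, I would deduce
\[
t\int(\cosh(\eta_0 u)-1)\mu(du)\leq C\,\frac{\eta_0^2}{\rho_t^2}\,e^{\eta_0/\rho_t},
\]
uniformly in $t\in(0,t_0]$. Combined with the on-diagonal bound $\int e^{-t\RE\psi_t(\sigma)}d\sigma\leq C'\rho_t$ (the content of Theorem~\ref{t-main} applied to $\bar Z_t$), this gives $\bar p_t(x)\leq C''\rho_t\exp\big(-\eta_0 x+C\tfrac{\eta_0^2}{\rho_t^2}e^{\eta_0/\rho_t}\big)$.

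Choosing $\eta_0=b\rho_t\ln(1+\rho_t x)$ with $b$ sufficiently small makes $e^{\eta_0/\rho_t}=(1+\rho_t x)^b$, so the correction is of order $(\ln(1+\rho_t x))^2(1+\rho_t x)^b$, dominated by $\rho_t x\ln(1+\rho_t x)$ for large $\rho_t x$, while $\eta_0 x=b\rho_t x\ln(1+\rho_t x)$. Hence $\bar p_t(x)\leq b_1\rho_t e^{-b_2|\rho_t x|\ln(1+|\rho_t x|)}$; the same shift applied to $\partial_x^k\bar p_t$ picks up a factor $(\sigma-i\eta_0)^k$ whose $L^1$ mass against $e^{-t\RE\psi_t(\sigma-i\eta_0)}$ is absorbed into a $\rho_t^k$ factor. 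Plugging these sharpened single-kernel bounds into (\ref{conv1}), which now reads $p_t=\bar p_t*P_t$, yields the upper compound kernel estimates with $f_{upper}$ as in (\ref{fup-sym}). The main obstacle is the calibration in the previous sentence: one must choose the shift $\eta_0$ so that the Bennett-type exponent dominates the quadratic-and-exponential cosh correction, and verify that the resulting constants $b_1,b_2$ are uniform over $t\in(0,t_0]$. This hinges on the bound $t\psi^L(\rho_t)\leq C$, which is exactly condition~\textbf{A} combined with the defining identity for $\rho_t$.
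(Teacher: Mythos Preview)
Your proposal is correct and follows essentially the same route as the paper: symmetry forces $a_t=x_t=0$, and the improved single-kernel bound for $\bar p_t$ comes from the same contour shift as in the non-symmetric case, now with $\eta$ allowed to grow with $\rho_t|x|$, the key being control of $t\int_{|\rho_t u|<1}(\cosh(\eta u)-1)\mu(du)$ via the second moment $t\psi^L(\rho_t)\leq C$. The only cosmetic differences are that the paper bounds the $\cosh$ term using monotonicity of $y\mapsto(\cosh y-1)/y^2$ to get $-\psi_t(i\eta)\leq C(\cosh(\eta/\rho_t)-1)$ and then minimizes over $\eta$, whereas you use $\cosh y-1\leq\tfrac12 y^2e^{|y|}$ and plug in an explicit $\eta_0=b\rho_t\ln(1+\rho_t x)$; both yield the Bennett exponent. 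One small point to tighten: for the derivatives the factor $|\eta_0|^k$ is not $O(\rho_t^k)$ uniformly in $x$ but $\rho_t^k(\ln(1+\rho_t x))^k$; this extra logarithmic factor is harmless since it is absorbed by slightly decreasing $b_2$.
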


Finally, we give a corollary which provides the  ``bell-like'' estimate for the distribution density $p_t(x)$ and its derivatives. To do that, we recall the notion of a sub-exponential distribution, see \cite{EGV79} and  \cite{Kl89}.

\begin{definition}\label{sub} A distribution function  $G$ on $[0,\infty)$ is called  \emph{sub-exponential}, if

(i) for  every $y\in \real$ one has $\lim_{x\to \infty}\frac{1- G(x-y)}{1- G(x)} =1$;

(ii) $ \lim_{x\to \infty} \frac{1- G^{*2}(x)}{1- G(x)}=2$.

A distribution density $g$  on $[0,\infty)$ is called  \emph{sub-exponential}, if it is positive on $[x_0, \infty)$ for some $x_0\geq 0$, and

(i) for  every $y\in \real$ one has $\lim_{x\to \infty}\frac{ g(x-y)}{g(x)} =1$;

(ii) $\lim_{x\to \infty} \frac{g^{*2}(x)}{g(x)}=2$.

\end{definition}
Note that the tail of a sub-exponential distribution (respectively, a sub-exponential density) decays slower than any exponential function, i.e.
$$
\lim_{x\to \infty} V(x)e^{ax}=\infty,
$$
where $V(x)= 1-G(x)$ (resp.,  $V(x)= g(x)$), and  $a>0$ is arbitrary.  This follows from the representation
$$
V(x)= c(x) e^{-\int_{z_0}^x b(y)dy} \quad \text{for}\quad x\geq z_0,
$$
where $z_0=0$ in the case $V(x)=1-G(x)$, and $z_0=x_0$ taken  as in  Definition~\ref{sub} for   $V(x)=g(x)$. The functions  $c: [z_0,\infty)\to (0,\infty)$ and  $b: [z_0,\infty)\to \real$ are  such  that $\lim_{x\to \infty} c(x)=c>0$, $\lim_{x\to\infty} b(x)=0$, cf. \cite{Kl89}.

\begin{theorem}\label{Klup}
Let  condition \textbf{A} hold true.
\begin{itemize}
\item[I.]  Suppose that there exist a  sub-exponential distribution function  $G$ and a constant $C$ such that
\begin{equation}
t\mu\Big(\{u: |\rho_t u|>v\}\Big)\leq C(1-G(v)), \quad v\in [1, \infty), \quad t\in (0,t_0]. \label{dens02}
\end{equation}

Then  for every $t_0>0$ there exist positive constants $b_1,b_2, C_1$, such that
\be\label{bell1}
p_t(x+a_t)\leq C_1  \rho_t \left( f_{upper}(\rho_t x)+ 1-G(|x\rho_t|)\right), \quad x\in \Re, \quad t\in (0, t_0],
\ee
where $f_{upper}$ is defined by \eqref{fup}.

\item[II.]  Suppose that the L\'evy measure $\mu$ possesses the density $m(u)={\mu(du)\over du}$, and there exist a  sub-exponential distribution density $g$ and  positive constants $C, t_0$ such that
     \begin{equation}
m_t(u):= t\rho_t^{-1}m(\rho_t^{-1}u)\leq Cg(|u|), \quad |u|\geq 1, \quad t\in (0,t_0].  \label{dens01}
\end{equation}
Then  there exist positive constants $b_1$, $b_2$, $C_1$, such that
\begin{equation}
p_t(x)\leq C_1 \rho_t \left(f_{upper} (x\rho_t)+ g(|x\rho_t|)\right),\quad x\in \real,\quad   t\in (0, t_0],
\label{bell2}
\end{equation}
where $f_{upper}$ is defined by \eqref{fup}.
\end{itemize}
\end{theorem}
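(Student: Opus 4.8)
The plan is to start from the convolution representation \eqref{conv1}, namely $p_t(\cdot+a_t)=\bar p_t* P_t$ with $P_t$ the compound Poisson law \eqref{Poist} built from $\Lambda_t$, and then invoke the upper compound kernel estimate of Theorem \ref{t-main1}(I): $p_t(x+a_t)\le\sum_m\frac1{m!}\int_\real\rho_t f_{upper}((x-y)\rho_t)\,\Lambda_t^{*m}(dy)$. The whole point is that the hypotheses \eqref{dens02}, resp.\ \eqref{dens01}, let us dominate this series by the \emph{first two} terms plus a sub-exponential tail, collapsing the compound kernel estimate into the bell-type estimate \eqref{bell1}, resp.\ \eqref{bell2}. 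So the first step is to rewrite $\Lambda_t$ in rescaled coordinates: setting $\tilde\Lambda_t(dv):=\Lambda_t(\rho_t^{-1}dv)$, condition \eqref{dens02} says exactly $\tilde\Lambda_t(\{|v|>v_0\})\le C(1-G(v_0))$ for $v_0\ge1$, i.e.\ the tail of $\tilde\Lambda_t$ is uniformly (in $t\le t_0$) dominated by the sub-exponential tail $\bar G:=1-G$; and $\tilde\Lambda_t$ has uniformly bounded total mass, since $\Lambda_t(\real)=t\,\mu(\{|\rho_t u|>1\})\le t\,\RE\psi(\rho_t)\cdot\text{const}\le\text{const}$ by \eqref{psipm1} and the definition \eqref{rho} of $\rho_t$ (plus Lemma~\ref{C1}-type control). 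After the substitution $y=\rho_t^{-1}v$ the claimed bound \eqref{bell1} becomes: $\sum_m\frac1{m!}\int f_{upper}(x\rho_t-v)\,\tilde\Lambda_t^{*m}(dv)\le C_1\big(f_{upper}(x\rho_t)+\bar G(|x\rho_t|)\big)$, so from now on we may forget about $t$ and $\rho_t$ and work with a fixed exponential shape $f_{upper}$, a finite measure $\Lambda$ of bounded mass with $\Lambda(\{|v|>v_0\})\le C\bar G(v_0)$, and prove $\sum_m\frac1{m!}(f_{upper}*\Lambda^{*m})(z)\le C_1(f_{upper}(z)+\bar G(|z|))$ uniformly over such $\Lambda$.

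For that reduced inequality I would split the sum into $m=0$, $m=1$, and $m\ge2$. The $m=0$ term is $f_{upper}(z)$, already of the right form. The $m=1$ term $(f_{upper}*\Lambda)(z)$ is the heart of the matter: because $f_{upper}$ decays exponentially while $\bar G$ is sub-exponential, the convolution of the light kernel with the heavy measure is, up to a constant, controlled by $\Lambda(\real)f_{upper}(z)+C\bar G(|z|)$ — this is the standard ``insensitivity'' property of sub-exponential tails (the heavy tail of $\Lambda$ dominates, the rest is absorbed into a multiple of $f_{upper}$). I would prove it directly by cutting the integral $\int f_{upper}(z-v)\Lambda(dv)$ at $|v|\le|z|/2$ (where $f_{upper}(z-v)\le b_1 e^{-b_2|z|/2}$ times the total mass, giving a constant times $f_{upper}(z/2)\lesssim$ a constant times $\bar G(|z|)$ by sub-exponentiality, or alternatively kept as $\text{const}\cdot f_{upper}(z)$ after adjusting $b_2$) and at $|v|>|z|/2$ (where $\Lambda$ has tail $\le C\bar G(|z|/2)$, and $\bar G(|z|/2)\le \text{const}\cdot\bar G(|z|)$ follows from property (i) of Definition~\ref{sub} iterated, or more carefully from the representation $\bar G(x)=c(x)e^{-\int_0^x b}$ with $b\to0$). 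For the tail $m\ge2$ I would use the sub-exponential bound $\Lambda^{*m}(\{|v|>v_0\})\le K(2+\eps)^m\bar G(v_0)$ for a suitable $K$ (this is the quantitative Kesten-type bound for sub-exponential convolution powers, available once $\Lambda$ has the required uniform tail domination and bounded mass), convolve once more with $f_{upper}$ as in the $m=1$ step, and sum: $\sum_{m\ge2}\frac1{m!}K(2+\eps)^m\big(\text{const}\cdot f_{upper}(z)+C\bar G(|z|)\big)<\infty$ because $\sum_m\frac{(2+\eps)^m}{m!}=e^{2+\eps}$. Collecting the three pieces gives \eqref{bell1}.

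Part II is the density version and follows the same scheme with the roles of ``distribution tail'' played by ``density''. Here the extra input is that the rescaled L\'evy-measure density $m_t(u)=t\rho_t^{-1}m(\rho_t^{-1}u)$ is dominated on $\{|u|\ge1\}$ by the sub-exponential density $Cg$, and on $\{|u|<1\}$ the contribution to $\hat Z_t$ has already been folded into $\bar Z_t$, so effectively $\Lambda_t$ (rescaled) has a density bounded by $Cg$ on its whole support outside the origin; for $m\ge1$ the convolution power $\Lambda_t^{*m}$ then has a density, and the sub-exponential density analogue of the Kesten bound ($g^{*m}(z)\le K(2+\eps)^m g(z)$ for large $|z|$, again uniform in the relevant parameters) plus one convolution with $f_{upper}$ yields $p_t(x)\le C_1\rho_t(f_{upper}(x\rho_t)+g(|x\rho_t|))$ by the identical three-way split. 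The main obstacle — and the only place real care is needed — is making the sub-exponential estimates \emph{uniform in $t\in(0,t_0]$}: the constants $K$ and the ratio $\bar G(x/2)/\bar G(x)$ depend only on the fixed functions $G$ (resp.\ $g$) and $C$ from \eqref{dens02}/\eqref{dens01} and on the uniform mass bound $\sup_{t\le t_0}\Lambda_t(\real)<\infty$, not on $t$ itself, so once that mass bound and the uniform tail domination are established the argument goes through verbatim; I would isolate the uniform mass bound as a short lemma proved from \eqref{psipm1}, \eqref{rho} and condition \textbf{A} exactly as in the proof of Theorem~\ref{t-main1}.
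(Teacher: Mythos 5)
Your overall plan---pass to the compound kernel estimate of Theorem~\ref{t-main1}.I, rescale by $\rho_t$, and then control the resulting series of convolution powers by sub-exponential tail closure---is the same plan as the paper's. The paper packages the series $\sum_{m\ge1}\frac1{m!}\tilde\Lambda_t^{*m}$ into the single compound-Poisson distribution $H=e^{-C}\sum_n\frac{C^nG^{*n}}{n!}$ and cites Theorem~3 of \cite{EGV79} to conclude $1-H\asymp 1-G$, whereas you split at $m=0,1,\ge2$ and invoke Kesten's uniform bound on $\frac{1-G^{*m}}{1-G}$. These are two phrasings of the same device (Kesten's lemma is precisely the engine inside the EGV closure theorem), and your observation that the resulting constants are uniform in $t$ because the dominating measure depends only on $(G,C)$ and the uniform mass bound $\sup_{t\le t_0}\Lambda_t(\real)<\infty$ is correct and matches the paper.

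There is, however, a genuine gap in your $m=1$ step, which you yourself flag as the heart of the matter. To estimate $\int f_{upper}(z-v)\,\Lambda(dv)$ you cut at $|v|=|z|/2$ and, on the far region, you rely on $\bar G(|z|/2)\le \mathrm{const}\cdot\bar G(|z|)$. This fails for general sub-exponential $G$. Property (i) of Definition~\ref{sub} and the representation $\bar G(x)=c(x)e^{-\int_0^x b}$, $b\to0$, only give $\bar G(x-y)/\bar G(x)\to1$ for \emph{fixed} $y$, together with Potter-type bounds $\bar G(x-y)/\bar G(x)\le Ke^{\eps y}$; they do not control shifts of order $x$. For instance $\bar G(x)=e^{-\sqrt x}$ is sub-exponential, yet $\bar G(x/2)/\bar G(x)=e^{(1-1/\sqrt2)\sqrt x}\to\infty$. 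The paper circumvents this by integrating by parts,
\[
\int_1^\infty f_{upper}(x-v)\,dG(v)\le \int_{-\infty}^{x-1}\bigl(1-G(x-v)\bigr)\,|f_{upper}'(v)|\,dv+f_{upper}(x-1),
\]
and then letting $x\to\infty$: the ratio $\frac{1-G(x-v)}{1-G(x)}\to1$ \emph{pointwise} in $v$, and $|f_{upper}'|$ is integrable, so the integral is $\sim(1-G(x))\,\|f_{upper}'\|_{L^1}$. In other words the correct cut is at a fixed scale, not at scale $|z|/2$. If you replace your $m=1$ estimate by this integration-by-parts computation (and its density analogue in Part~II), the rest of your argument---the reduction to $\tilde\Lambda_t$, the uniform mass bound, and the $m\ge2$ Kesten-type summation---is sound and parallels the paper.
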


\begin{remark}\label{rem1}
For Theorem \ref{Klup}, the extensions analogous to those given in Theorem \ref{aux-theo} and Theorem \ref{aux-theo-sym} are valid. Namely, for every $k\geq 1$, under the proper choice of the constants $b_1, b_2, C_k$, we have
\be\label{bell_shift_der}
\left|{\partial^k\over \partial x^k}p_t(x+a_t)\right|\leq C_k \rho_t^{k+1} f(\rho_t x), \quad x\in \Re, \quad t\in (0, t_0],
\ee
with
\begin{equation}
f(x)=
\begin{cases}
f_{upper}(x) + 1-G(|x|),& \text{when $\mu$ satisfies \eqref{dens02}},
\\
f_{upper}(x)+ g(|x|), & \text{when $\mu$ satisfies (\ref{dens01})}.
\end{cases}\label{f}
\end{equation}
 For a symmetric process $Z_t$ we have $a_t=0$,  and can take $f_{upper}$  in \eqref{f}   defined by \eqref{fup-sym}.
\end{remark}

\section{Proofs}\label{proofs}

\subsection{Auxiliaries}  We begin  with a brief discussion of the properties of the auxiliary functions $\psi^L$ and $\psi^U$. Everywhere below it is assumed that condition \textbf{A}   holds true.

Clearly, the functions $L$ and $U$, involved in the definition of $\psi^L$  and $\psi^U$,  respectively,  satisfy
$$
U(x_2)-U(x_1)=\int_{x_1}^{x_2} {2\over x}L(x)\, dx, \quad x_1<x_2.
$$
Then, by the Fubini theorem, we have the following relation for   $\psi^L$ and $\psi^U$:
\begin{equation}
\psi^U(\xi_2)-\psi^U(\xi_1) =\int_{\xi_1}^{\xi_2} \tfrac{2}{\eta} \psi^L(\eta)\,d\eta, \quad \xi_1<\xi_2.\label{psipm2}
\end{equation}
Combining this relation with  condition \textbf{A}, we obtain the lower growth bound for the real part of the characteristic exponent $\psi$.

\begin{lemma}\label{C1} Suppose that condition \textbf{A} holds true. Then there exists a constant $c>0$ such that
    \begin{equation}
    \RE\psi(\xi) \geq c |\xi|^{2/\beta}, \quad |\xi|\geq 1. \label{psi-al}
    \end{equation}
\end{lemma}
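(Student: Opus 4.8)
The plan is to exploit the integral identity \eqref{psipm2} together with condition \textbf{A} to obtain a differential inequality for $\psi^U$, integrate it, and then transfer the resulting lower bound to $\RE\psi$ via \eqref{psipm1}. First I would observe that $\psi^U$ is nondecreasing on $(0,\infty)$, and that under \eqref{mu} it is strictly positive for all $\xi>0$; indeed, if $\psi^U(\xi_0)=0$ for some $\xi_0>0$ then $\mu$ would be supported on $\{0\}$, contradicting $\mu(\Re)=\infty$. Hence for $\xi\ge 1$ we have $\psi^U(\xi)\ge\psi^U(1)=:c_0>0$. Combining \eqref{psipm2} with condition \textbf{A} in the form $\psi^L(\eta)\ge \beta^{-1}\psi^U(\eta)$ gives, for $1\le\xi_1<\xi_2$,
\be
\psi^U(\xi_2)-\psi^U(\xi_1)=\int_{\xi_1}^{\xi_2}\frac{2}{\eta}\,\psi^L(\eta)\,d\eta\ \ge\ \frac{2}{\beta}\int_{\xi_1}^{\xi_2}\frac{\psi^U(\eta)}{\eta}\,d\eta .
\ee

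Next I would turn this into a growth estimate for $\psi^U$. Set $\Phi(\xi):=\psi^U(\xi)$; the inequality above says that the absolutely continuous function $\xi\mapsto\Phi(\xi)$ satisfies $\Phi'(\xi)\ge\frac{2}{\beta}\,\Phi(\xi)/\xi$ for a.e.\ $\xi\ge1$ (one gets this by dividing by $\xi_2-\xi_1$ and letting $\xi_2\downarrow\xi_1$, using that $\Phi$ is monotone hence differentiable a.e.; alternatively one can work directly with the integral form and Gronwall's inequality to avoid differentiability issues). Dividing by $\Phi(\xi)>0$ and integrating from $1$ to $\xi$ yields
\be
\ln\frac{\Phi(\xi)}{\Phi(1)}\ \ge\ \frac{2}{\beta}\int_1^\xi\frac{d\eta}{\eta}\ =\ \frac{2}{\beta}\ln\xi,
\ee
so $\psi^U(\xi)\ge c_0\,\xi^{2/\beta}$ for $\xi\ge1$. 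To make this rigorous without invoking a.e.\ differentiability, I would instead apply the integral form of Gronwall's lemma to $\Phi$: from $\Phi(\xi)\ge\Phi(1)+\frac{2}{\beta}\int_1^\xi\Phi(\eta)\,\eta^{-1}\,d\eta$ one deduces $\Phi(\xi)\ge\Phi(1)\exp\!\big(\frac{2}{\beta}\int_1^\xi\eta^{-1}\,d\eta\big)=\Phi(1)\xi^{2/\beta}$.

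Finally, by the left inequality in \eqref{psipm1} together with $\psi^L\ge\beta^{-1}\psi^U$, we get $\RE\psi(\xi)\ge(1-\cos1)\psi^L(\xi)\ge(1-\cos1)\beta^{-1}\psi^U(\xi)\ge(1-\cos1)\beta^{-1}c_0\,\xi^{2/\beta}$ for $\xi\ge1$; since $\RE\psi$ is even, the same bound holds for $|\xi|\ge1$ with $c:=(1-\cos1)\beta^{-1}\psi^U(1)>0$, which is \eqref{psi-al}. The only mildly delicate point is the passage from the integral inequality to the power bound; I expect this to be the main (though still routine) obstacle, and the cleanest route is the integral Gronwall argument, which sidesteps any regularity discussion of $\psi^U$ and $\psi^L$ entirely. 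Everything else — monotonicity and positivity of $\psi^U$ under \eqref{mu}, and the elementary estimate \eqref{psipm1} — is already in hand from the excerpt.
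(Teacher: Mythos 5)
Your proof is correct and follows essentially the same route as the paper: both derive the integral inequality $\psi^U(\xi)\ge\psi^U(1)+\tfrac{2}{\beta}\int_1^\xi\psi^U(\eta)\eta^{-1}d\eta$ from \eqref{psipm2} and condition \textbf{A}, extract the power lower bound $\psi^U(\xi)\ge\psi^U(1)\xi^{2/\beta}$, and then transfer it to $\RE\psi$ via \eqref{psipm1} and \textbf{A}. The only cosmetic difference is that the paper substitutes $\xi=e^v$ and observes that $e^{-(2/\beta)v}\theta^U(v)$ is non-decreasing, whereas you solve the same integral inequality directly by Gronwall; your version is, if anything, a touch more explicit about the regularity issues.
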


\begin{remark} By  the standard properties of the Fourier transform, it follows immediately from (\ref{psi-al})  that for every $t>0$ the distribution  density $p_t$ of $Z_t$ exists and   $p_t\in C_b^\infty(\real)$.
\end{remark}

\begin{proof}[Proof of the lemma.] Define
    \begin{equation}
    \theta^U (v):=\psi^U (e^v), \quad \theta^L (v):=\psi^L (e^v), \quad v\in \Re.    \label{theta}
    \end{equation}
It follows from \eqref{psipm2} that these functions are related as follows:
    \begin{equation}
    \theta^U(v_2) -\theta^U(v_1)=2\int_{v_1}^{v_2} \theta^L (v)dv, \quad v_1<v_2. \label{theta2}
    \end{equation}
Combined with condition \textbf{A}, this gives
\begin{equation}
    \theta^U(v_2) -\theta^U(v_1)\geq {2\over \beta}\int_{v_1}^{v_2} \theta^U (v)dv, \quad v_1<v_2. \label{theta21}
    \end{equation}
This means that for the Stieltjes measure on $\Re^+$, generated by the non-decreasing function $\theta^U$, the absolutely continuous part in its Lebesgue decomposition is bounded from below by $(2/\beta)\theta^U(u)\, du$. Using this observation, one can easily  show the inequality
 \begin{equation}
    e^{-(2/\beta)v_2}\theta^U(v_2) \geq e^{-(2/\beta)v_1}\theta^U(v_1), \quad v_1<v_2.\label{psi+0}
    \end{equation}
It follows from (\ref{mu}) and the definitions of the functions $\psi^U$ and $\theta^U$ that
$\theta^U(0)=\psi^U(1)>0$.
Therefore by (\ref{psi+0}) we get
$\theta^U(v)\geq c_1 e^{(2/\beta)v}$,  $v\geq 0$,
with $c_1=\theta^U(0)>0$, which in turn implies
$$
    \psi^U(\xi)\geq c_1|\xi|^{2/\beta}, \quad |\xi|\geq 1.
$$
Using condition \textbf{A} and the left inequality in (\ref{psipm1}), we get (\ref{psi-al}) with $c=c_1(1-\cos 1)\beta^{-1}$.

\noindent Together with the function $\rho_t$ defined by (\ref{rho}), we  consider the functions
\be
\rho_t^U :=\inf\{ \xi> 0:  \psi^U(\xi)\geq 1/t\}, \quad \rho_t^L :=\inf\{ \xi> 0:  \psi^L(\xi)\geq 1/t\}.\label{rhoUL}
\ee
In view of \eqref{psi-al},  \eqref{psipm1} and condition \textbf{A}, the sets in the above definition are non-empty.   Note that
\be\label{id}
\RE\psi(\rho_t)=\psi^U(\rho_t^U)=\psi^L(\rho_t^L)={1\over t},
\ee
because the functions $\psi$ and $\psi^U$ are continuous, and
the function $\psi^L$ is right continuous on $[0,\infty)$. In the sequel, we write $f_t\asymp g_t$, $t\in A$, if there  exist positive constants $c_1$, $c_2$ such that
$c_1f_t\leq g_t\leq c_2 f_t$, $t\in A$.

\end{proof}

\begin{lemma}\label{l-aux1}  For every  $c\in (0,\infty)$ one has
    $$
   \rho_t\asymp \rho_{ct}\asymp\rho_t^U\asymp\rho_t^L,\quad t\in(0,1].
    $$
\end{lemma}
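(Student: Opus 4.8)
The plan is to establish the four asymptotic equivalences in Lemma~\ref{l-aux1} by chaining together comparisons that follow from condition~\textbf{A} and the monotone-rescaling relation \eqref{psipm2}. Recall from \eqref{psipm1} and condition~\textbf{A} that, up to fixed multiplicative constants, $\RE\psi$, $\psi^U$ and $\psi^L$ are all comparable as functions of $\xi$:
\be\label{eq:plan-comp}
(1-\cos 1)\,\psi^L(\xi)\leq \RE\psi(\xi)\leq 2\psi^U(\xi)\leq 2\beta\,\psi^L(\xi),\quad \xi\in\Re.
\ee
So the three functions $\RE\psi$, $\psi^U$, $\psi^L$ differ only by bounded factors, and I expect the quasi-inverses $\rho_t$, $\rho_t^U$, $\rho_t^L$ to agree up to bounded factors as well; this will give $\rho_t\asymp\rho_t^U\asymp\rho_t^L$ on $(0,1]$. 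The remaining equivalence $\rho_t\asymp\rho_{ct}$ is of a different nature: it says the quasi-inverse of $\RE\psi$ is ``regularly varying at $0$'' in the weak sense that scaling the argument $1/t$ by a constant changes $\rho_t$ only by a bounded factor, and this is where the polynomial growth bound \eqref{psi-al} together with \eqref{psipm2} will do the work.

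\textbf{Step 1 (comparison of the three quasi-inverses).} For a nondecreasing function $\phi$ on $[0,\infty)$ write $\phi^{-1}(s)=\inf\{\xi>0:\phi(\xi)\geq s\}$. If $c_1\phi_1\leq\phi_2\leq c_2\phi_1$ pointwise, then a routine check gives $\phi_2^{-1}(s)\leq\phi_1^{-1}(s/c_1)$ and $\phi_1^{-1}(s/c_2)\leq\phi_2^{-1}(s)$. Applying this with the pairs from \eqref{eq:plan-comp}, I get, for each $t$,
\be
\rho_{t}^{L}\big(\text{at scale }(1-\cos1)/t\big)\ \le\ \rho_t\ \le\ \rho_{t}^{U}\big(\text{at scale }1/(2t)\big)\ \le\ \rho^L_t\big(\text{at scale }1/(2\beta t)\big),
\ee
i.e.\ each of $\rho_t,\rho_t^U,\rho_t^L$ is squeezed between two values of another of these functions evaluated at a constant multiple of $t$. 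Hence once I know $\rho_t^L\asymp\rho_{ct}^L$ (and similarly for $U$), or more directly once I know the scaling equivalence $\rho_s\asymp\rho_{cs}$ for \emph{one} of these functions, all the ``$\asymp$'' relations among $\rho_t,\rho_t^U,\rho_t^L$ at a common $t$ follow. So everything reduces to: the quasi-inverse of $\psi^U$ (say) changes by at most a bounded factor when its argument is multiplied by a constant.

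\textbf{Step 2 (scaling equivalence $\rho_t^U\asymp\rho_{ct}^U$).} Using the substitution $\theta^U(v)=\psi^U(e^v)$ and relation \eqref{theta2}, \eqref{theta21}, the proof of Lemma~\ref{C1} already shows $e^{-(2/\beta)v}\theta^U(v)$ is nondecreasing, equivalently $\psi^U(\lambda\xi)\geq\lambda^{2/\beta}\psi^U(\xi)$ for $\lambda\geq1$, $\xi\geq1$; and \eqref{theta2} gives the reverse-type bound $\theta^U(v_2)-\theta^U(v_1)\leq 2(v_2-v_1)\theta^L(v_2)\leq 2(v_2-v_1)\theta^U(v_2)$ for $v_1<v_2$, whence a matching upper power bound $\psi^U(\lambda\xi)\leq C\lambda^{2}\psi^U(\xi)$ for, say, $1\le\lambda\le 2$ and $\xi\ge 1$ (iterating dyadically gives a polynomial bound with a worse exponent, but a fixed power suffices). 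Two-sided power bounds $c\xi^{2/\beta}\le\psi^U(\xi)\le C\xi^{q}$ on $[1,\infty)$ invert to two-sided power bounds $c' s^{1/q}\le (\psi^U)^{-1}(s)\le C' s^{\beta/2}$ for $s$ large (i.e.\ $t$ small), and a function with such two-sided power control automatically satisfies $(\psi^U)^{-1}(s)\asymp(\psi^U)^{-1}(cs)$: indeed $(\psi^U)^{-1}(cs)\le C'(cs)^{\beta/2}\le C'c^{\beta/2}s^{\beta/2}$ and $(\psi^U)^{-1}(s)\ge c' s^{1/q}$, so the ratio is bounded above by a constant depending only on $c$, $\beta$, $q$; the lower bound on the ratio is symmetric. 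This gives $\rho_t^U\asymp\rho_{ct}^U$ on $(0,1]$, and the same argument (or Step~1 transferring it) gives it for $\rho_t$ and $\rho_t^L$.

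\textbf{Step 3 (assembly).} Combining Steps~1 and~2: from Step~2, $\rho_t\asymp\rho_{ct}$; from Step~1, at a fixed $t$ the quantities $\rho_t$, $\rho_t^U$, $\rho_t^L$ are each bounded above and below by $\rho^{U}$ or $\rho^{L}$ evaluated at a constant multiple of $t$, and by Step~2 those are $\asymp\rho_t^U$, $\asymp\rho_t^L$; chaining, $\rho_t\asymp\rho_t^U\asymp\rho_t^L$ on $(0,1]$. All constants depend only on $\beta$ and the fixed constant $c$, not on $t$, so the equivalences hold uniformly on $(0,1]$ as claimed. \textbf{The main obstacle} I anticipate is Step~2: one must be a little careful that the ``$\asymp$'' in the scaling relation is genuinely uniform in $t\in(0,1]$ and that the quasi-inverse inequalities go through despite $\psi^U$ possibly being only right-continuous and not strictly increasing (which is why \eqref{id} is invoked — it pins down $\psi^U(\rho^U_t)=1/t$ exactly), and that the upper power bound on $\psi^U$ near infinity, while crude, is legitimately derived from \eqref{theta2} rather than assumed.
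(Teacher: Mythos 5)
Your overall plan matches the paper's: reduce everything via sandwiching to the single claim $\rho^U_{ct}\asymp\rho^U_t$ and then prove that scaling equivalence using the growth information on $\psi^U$ that comes from condition \textbf{A} via \eqref{psi+0}. The paper indeed writes $\rho^U_t\leq\rho^L_t\leq\rho^U_{\beta t}$ and $\rho^U_{2t}\leq\rho_t\leq\rho^L_{(1-\cos 1)t}$ and then reduces to \eqref{33}, which is essentially your Step~1. Step~2, however, has a genuine gap in the way you try to conclude.

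You derive two \emph{absolute} power bounds $c\xi^{2/\beta}\leq\psi^U(\xi)\leq C\xi^q$ on $[1,\infty)$ and then invert them to $(s/C)^{1/q}\leq(\psi^U)^{-1}(s)\leq(s/c)^{\beta/2}$, and from this you claim that $(\psi^U)^{-1}(cs)/(\psi^U)^{-1}(s)$ is bounded by a constant. But the upper and lower envelope exponents are different: the quotient you actually get is
$$
\frac{(\psi^U)^{-1}(cs)}{(\psi^U)^{-1}(s)}\ \leq\ \frac{C'\,(cs)^{\beta/2}}{c'\,s^{1/q}}\ =\ \frac{C'c^{\beta/2}}{c'}\,s^{\beta/2-1/q},
$$
and since $q\geq 2>2/\beta$ we have $\beta/2-1/q>0$, so this blows up as $s\to\infty$. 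Two-sided absolute power bounds with distinct exponents never give scale invariance of the inverse; a function like $\xi^{\alpha(\xi)}$ with $\alpha$ slowly varying (exactly what Example~\ref{exa3} is about) satisfies such bounds but its inverse is not controlled this way. So Step~2 as written does not close.

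The fix is short, and you already have the right ingredient in hand. Instead of passing through absolute envelopes, use the \emph{relative} bound \eqref{psi+0} directly:
$$
\frac{\psi^U(\xi_2)}{\psi^U(\xi_1)}\ \geq\ \left(\frac{\xi_2}{\xi_1}\right)^{2/\beta},\qquad \xi_2>\xi_1>0,
$$
and evaluate it at $\xi_2=\rho^U_t$, $\xi_1=\rho^U_{ct}$ for $c>1$ (these are ordered by monotonicity of $\rho^U$). Using \eqref{id}, the left side equals $c$, so $(\rho^U_t/\rho^U_{ct})^{2/\beta}\leq c$, i.e.\ $\rho^U_t\leq c^{\beta/2}\rho^U_{ct}$. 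Together with the trivial monotone bound $\rho^U_{ct}\leq\rho^U_t$ this gives $\rho^U_{ct}\asymp\rho^U_t$ uniformly in $t$, which is exactly the paper's argument. This also means the dyadic iteration producing the upper power bound $\psi^U(\lambda\xi)\leq C\lambda^q\psi^U(\xi)$ is unnecessary for this lemma: only the one-sided relative lower bound plays a role.

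One small caution on Step~1: some of the sandwich inequalities you wrote are in the wrong direction. For instance, from $(1-\cos 1)\psi^L\leq\RE\psi$ one gets $\rho_t\leq\rho^L_{(1-\cos 1)t}$, i.e.\ $\rho^L$ evaluated at a larger argument of $\psi^L$-threshold bounds $\rho_t$ from \emph{above}, not below. Once Step~2 is repaired these sign slips are harmless because the conclusion you need is symmetric, but it is worth getting the one-line inclusions right; the paper states them cleanly as $\rho^U_t\leq\rho^L_t\leq\rho^U_{\beta t}$ and $\rho^U_{2t}\leq\rho_t\leq\rho^L_{(1-\cos 1)t}$.
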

\begin{proof}   From the relation
 $\psi^L\leq \psi^U$  and condition \textbf{A}  we have
$\rho^U_t\leq \rho_t^L\leq \rho^U_{\beta t}$.
In addition, by \eqref{psipm1} we have
$\rho^U_{2t}\leq \rho_t\leq \rho^L_{(1-\cos 1)t}$.
Therefore,  it is enough to prove that
    \begin{equation}
     \rho_{ct}^U\asymp \rho_t^U, \quad t\in(0,1]. \label{33}
    \end{equation}
Note that $\rho_t^U$ is monotone  by construction. Hence one of the bounds in \eqref{33} is trivial (the lower one if $c>1$, and the upper one if $c<1$).  The other bound  in  \eqref{33} follows from the inequality
$$
{\psi^U(\xi_2)\over \psi^U(\xi_1)}\geq \left(\xi_2\over \xi_1\right)^{2/\beta}, \quad \xi_2>\xi_1>0,
$$
which holds true due to \eqref{psi+0}.
\end{proof}

\subsection{Upper bounds}
Consider the decomposition (\ref{decomp}). Note that the real part of the  characteristic exponent $\psi_t$ of the variable $\bar Z_t$  satisfies
\be\label{Repsit}\ba
\mathrm{Re}\, \psi_t(\xi)&=t\mathrm{Re}\, \psi(\xi)-t\int_{|\rho_t u|\geq 1}\Big(1-\cos(\xi u)\Big)\mu(du)\\&\geq t\mathrm{Re}\, \psi(\xi)-2t\int_{|\rho_t u|\geq 1}\mu(du)\geq t\mathrm{Re}\, \psi(\xi)-2t\psi^U(\rho_t).\ea
\ee
Hence, by Lemma \ref{C1} for every $t>0$ the variable $\bar Z_t$ has a distribution density $\bar p_t\in C_b^\infty(\Re)$, and
\be\label{rep}
\frac{\partial^k }{\partial x^k}\bar p_t(x)={1\over 2\pi}\int_\Re (-i\xi)^ke^{-ix\xi-\psi_t(\xi)}\, d\xi, \quad k\geq 0.
\ee
By \eqref{conv1}, the upper bounds in Theorem \ref{t-main}, Theorem \ref{t-main1}, and Theorem \ref{aux-theo} are provided by the following lemma, which gives respectively the upper bounds for the  distribution density $\bar p_t$ and its derivatives.

\begin{lemma}\label{ptbar}
For any $k\geq 0$, $t_0>0$,  there exist positive constants $b_1$ and $b_2$ such that
\begin{equation}
\Big| \frac{\partial^k }{\partial x^k} \bar p_t (x+a_t)\Big| \leq b_1 \rho_t^{k+1} e^{-b_2\rho_t|x|}, \quad x\in \real, \quad t\in (0,t_0]. \label{ptbar-eq}
\end{equation}

\end{lemma}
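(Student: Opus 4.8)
The plan is to establish \eqref{ptbar-eq} by working directly with the Fourier representation \eqref{rep} and exploiting the lower bound on $\mathrm{Re}\,\psi_t$ coming from \eqref{Repsit}. First I would record the scaling structure: after the change of variables $\xi\mapsto\rho_t\xi$ in \eqref{rep}, the desired estimate \eqref{ptbar-eq} is equivalent to a bound of the form $|\partial_x^k \tilde p_t(x)|\leq b_1 e^{-b_2|x|}$ uniformly in $t\in(0,t_0]$, where $\tilde p_t$ is the density of $\rho_t(\bar Z_t+a_t)$, whose characteristic exponent is $\tilde\psi_t(\xi)=\psi_t(\rho_t^{-1}\xi)+ i\rho_t a_t\xi$. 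So everything reduces to uniform-in-$t$ control of the rescaled exponent $\tilde\psi_t$, which by \eqref{psit}, the definition of $a_t$, and \eqref{id} is a ``normalized'' exponent whose associated $\psi^U$-type quantity at $\xi=1$ equals $1/t\cdot t=1$; more precisely, using Lemma~\ref{l-aux1}, the quantities $\rho_t,\rho_t^U,\rho_t^L$ are all comparable, so the truncation level $1/\rho_t$ in \eqref{psit} is, after rescaling, comparable to $1$, and $\mathrm{Re}\,\tilde\psi_t(1)\asymp 1$.

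Next I would prove two facts about $\mathrm{Re}\,\tilde\psi_t$, uniformly in $t\in(0,t_0]$. The first is a \emph{lower growth bound}: $\mathrm{Re}\,\tilde\psi_t(\xi)\geq c|\xi|^{2/\beta}$ for $|\xi|\geq 1$ — this follows by rerunning the argument of Lemma~\ref{C1} applied to the truncated L\'evy measure $\mu(du)\I_{|\rho_t u|\le 1}$, whose $\psi^U$, $\psi^L$ still satisfy the relation \eqref{psipm2} and still satisfy condition \textbf{A} with the same $\beta$ (truncation preserves the pointwise inequality between the two integrals since, for $|\rho_t u|\le1$, $U(\xi u)$ and $L(\xi u)$ are unchanged and $\psi^U$ restricted to the truncated measure is dominated by the full one), together with the fact that $\mathrm{Re}\,\psi_t(\rho_t)\asymp 1$ by \eqref{Repsit} and \eqref{id} and condition \textbf{A}. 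The second is a \emph{quadratic lower bound near the origin}: $\mathrm{Re}\,\tilde\psi_t(\xi)\geq c\xi^2$ for $|\xi|\le1$, which holds because $\psi_t$ is built from a truncated (hence finite-variance) L\'evy measure and, after rescaling, $\int (u')^2\,\tilde\mu_t(du')\asymp 1$ by the comparability $\rho_t\asymp\rho_t^L$. Combining, $\mathrm{Re}\,\tilde\psi_t(\xi)\geq c(\xi^2\wedge|\xi|^{2/\beta})\geq c'\ln(e+\xi^2)$ is more than enough for absolute convergence of \eqref{rep} and all its $x$-derivatives, uniformly in $t$.

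The exponential decay in $x$ I would obtain by the standard contour-shift / analytic-continuation argument: $\tilde\psi_t$ extends analytically to the strip $|\mathrm{Im}\,\xi|<\rho_t$ (the truncation makes $e^{i\xi u}$ entire with the relevant integral converging for $|\mathrm{Im}\,\xi|\,|u|<$ bounded by the truncation), and since $\rho_t\ge\rho_{t_0}>0$ under condition \textbf{A}, one has a common strip $|\mathrm{Im}\,\xi|\le\delta$ for all $t\in(0,t_0]$, with $\mathrm{Re}\,\tilde\psi_t$ still controlled from below there. Shifting the contour of integration in \eqref{rep} from $\real$ to $\real\pm i\delta$ (choosing the sign according to $\mathrm{sgn}\,x$) produces the factor $e^{-\delta|x|}$ and leaves a still-absolutely-convergent integral, giving $b_2=\delta\rho_{t_0}$ after undoing the rescaling. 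I expect the main obstacle to be the \emph{uniformity} bookkeeping: one must check that all the comparison constants produced along the way (in the analytic-continuation bound, in the lower bounds for $\mathrm{Re}\,\tilde\psi_t$, and in the width $\delta$ of the strip) can be chosen independently of $t\in(0,t_0]$, which is exactly where Lemma~\ref{l-aux1} and the $\beta$-uniform growth \eqref{psi-al} do the essential work; the role of the shift $a_t$ is merely to recentre $\bar Z_t$ and only affects the imaginary part of $\tilde\psi_t$, so it does not interfere with any of the real-part estimates.
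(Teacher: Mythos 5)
Your strategy is essentially the paper's: work from the Fourier representation \eqref{rep}, exploit the entirety of $\psi_t$ coming from the compact support of the truncated L\'evy measure, shift the contour, and control the resulting integral by a uniform lower bound on the real part of the (rescaled) exponent. The extra rescaling layer you introduce is cosmetic. However, there are two places where what you wrote is either wrong or has a real gap.

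First, the bookkeeping around the rescaling is off, and this obscures where the $\rho_t$ in the exponent actually comes from. If $\tilde p_t$ is the density of $\rho_t \bar Z_t$, the rescaled exponent is $\tilde\psi_t(\xi)=\psi_t(\rho_t\xi)$, not $\psi_t(\rho_t^{-1}\xi)$; consequently the useful strip for $\tilde\psi_t$ is $|\mathrm{Im}\,\xi|\le 1$, which is \emph{independent of $t$}. Your reasoning ``$\rho_t\geq\rho_{t_0}>0$, so there is a common strip $|\mathrm{Im}\,\xi|\le\delta$'' is the wrong mechanism: if you really shifted the \emph{unrescaled} contour in \eqref{rep} by a fixed $\delta\le\rho_{t_0}$ you would only get $e^{-\delta|x|}$, which does \emph{not} match the target $e^{-b_2\rho_t|x|}$ (since $\rho_t/\rho_{t_0}\to\infty$ as $t\to 0$). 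The $\rho_t$ in the exponent comes from undoing the rescaling of a fixed-width shift in the $\tilde\psi_t$-variable — equivalently, from shifting by $\eta=\pm\rho_t$ (the maximal allowed shift) in the original variable, exactly as the paper does with $\eta=-\rho_t\,\mathrm{sign}\,x$. Your final assertion ``$b_2=\delta\rho_{t_0}$'' should simply be $b_2=\delta$.

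Second, and more substantively, asserting that the shifted integral is ``still absolutely convergent'' is exactly the point that has to be proved, and uniformly in $t$, or you will not recover the $\rho_t^{k+1}$ factor. Your lower bounds $\RE\tilde\psi_t(\xi)\gtrsim |\xi|^{2/\beta}$ are on the real axis; you still need a comparison of $\RE\tilde\psi_t(y+i\delta)$ with $\RE\tilde\psi_t(y)$, uniformly in $t$. The paper's estimate \eqref{H} does precisely this: writing $1-e^{-\eta u}\cos(yu)-\eta u=(1-e^{-\eta u}-\eta u)+e^{-\eta u}(1-\cos(yu))$ and using that on the support of the truncated measure $|\eta u|\le 1$ (so $e^{-\eta u}\ge e^{-1}$) gives $\RE\psi_t(y+i\eta)\ge \Psi_t(\eta)+e^{-1}\RE\psi_t(y)$, and then $-\Psi_t(\eta)$ is bounded by a $t$-independent constant for $|\eta|\le\rho_t$. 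This is where the truncation is used in an essential way, and without making this step explicit the argument is incomplete. Once this is in place the remaining integral $\int(|\eta|+|y|)^k e^{-e^{-1}\RE\psi_t(y)}\,dy$ is controlled by Lemma~\ref{l-aux2}, which you also do not invoke but would need to obtain $\rho_t^{k+1}$.
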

\begin{proof}
We apply the complex analysis technique, similar to the one used in \cite{KS10a} and \cite{KK11}.

\noindent Observe that the function $\psi_t(\xi)$ has an analytic extension to the complex plane, and for given $x$ and $t$ the  function
$$
H(t,x,z)= -ixz-\psi_t(z)=-ixz-t\int_{|\rho_t u|<1} (1-e^{i uz}+i uz)\mu(du)
$$
is well defined for $z\in \comp$. We consider the integral in the right hand side of \eqref{rep} as the integral of the function
$(-iz)^k e^{H(t,x,z)}$, $z\in \comp$,
along the real line in $\comp$, and change the integration contour as follows.

 Fix $\eta\in \Re$ such that  $|\eta|\leq \rho_t$, and for $M>0$ consider the rectangular contour $\Gamma^M=\bigcup_{j=1}^4\Gamma^M_j$ with
$$
\Gamma_1^M=\{y+i 0, \quad |y|\leq M\}, \quad \Gamma_2^M=\{y+i\eta, \quad |y|\leq M\}, \quad \Gamma_{3,4}^M=\{\pm M+is\eta, s\in [0,1]\},
$$
properly oriented.   By the Cauchy theorem we have
\begin{equation}
\left[\int_{\Gamma^M_1}+\int_{\Gamma^M_2}+\int_{\Gamma^M_3}+\int_{\Gamma^M_4}\right](-iz)^ke^{H(t,x,z)}dz=\int_{\Gamma^M}(-iz)^ke^{H(t,x,z)}dz=0. \label{Cauchy}
\end{equation}
Let us show that the integrals along $\Gamma_j^M$, $j=3,4$, vanish as  $M\to \infty$. Consider the case  $j=3$, the case $j=4$ is completely analogous. We have
\begin{equation}
\begin{split}
\mathrm{Re} H(t,x,M+is\eta)&= sx\eta -t\int_{|\rho_t u|<1}\Big(1-us\eta-e^{-us\eta}\Big)\mu(du)\\&- t \int_{|\rho_t u|<1} e^{-s\eta u} (1-\cos (Mu))\mu(du)\\&
\leq sx\eta -\psi_t (is\eta)- e^{-1} \mathrm{Re} \psi_t(M). \label{H}
\end{split}
\end{equation}
Denote
$$
\Psi_t(\eta):=\psi_t(i\eta),
\quad C(t,x, \eta):=\sup_{s\in [0,1]}\Big(sx\eta - \Psi_t(s\eta)\Big).
$$
Then by \eqref{H} we have
$$
\left|\int_{\Gamma^M_3}(-iz)^ke^{H(t,x,z)}dz\right|\leq \Big(\eta^2+M^2\Big)^{k/2}\exp\Big(C(t,x, \eta)-e^{-1} \mathrm{Re} \psi_t(M)\Big).
$$
By Lemma \ref{C1} and (\ref{Repsit}), the above inequality implies that the integral along $\Gamma_3^M$ vanishes as $M\to \infty$.

\noindent Passing to the limit in (\ref{Cauchy}) as $M\to \infty$, we get the representation
$$
\frac{\partial^k}{\partial x^k} \bar p_t(x)= {1\over 2\pi}\int_{\Re+i\eta}(-iz)^ke^{H(t,x,z)}dz={1\over 2\pi}\int_{\mathbb{R}} (-iy+\eta)^k e^{\eta x-ixy-\psi_t(y+i\eta)}\, dy.
$$
Thus, by \eqref{H} with $y$ instead of $M$ and $s=1$, we  have
\begin{equation}
\Big|\frac{\partial^k}{\partial x^k} \bar p_t(x)\Big| \leq {1\over 2\pi} e^{\eta x-\Psi_t(\eta)} \int_\real \big( |\eta|+|y|\big)^k e^{-e^{-1}\RE\psi_t(y)}dy.\label{est1}
\end{equation}
Recall that $|\eta|\leq \rho_t$, and denote
$c:=\sup_{|x|\leq 1}{1-x-e^{-x}\over x^2}$.
Then, using the first inequality in \eqref{psipm1} and the first identity in \eqref{id}, we get  for $|\eta|\leq \rho_t$
$$
-\Psi_t(\eta)\leq c t\int_{|\rho_t u |<1} (\eta u)^2\mu(du)\leq ct\psi^L(\rho_t)\leq \left({c\over 1-\cos 1}\right)t\RE\psi(\rho_t)={c\over 1-\cos 1}.
$$
Hence, we can take  $\eta=-\rho_t\,\mathrm{sign}\, x$, and obtain from \eqref{est1}
$$
\Big|\frac{\partial^k}{\partial x^k} \bar p_t(x)\Big|\leq  c_1 e^{-\rho_t |x|} \int_\real \big( \rho_t^k+|y|^k\big) e^{-e^{-1}\RE\psi_t(y)}dy
$$
with
$c_1={2^{k-2}\over \pi} \exp\left({c\over 1-\cos 1}\right)$.

Recall that $\RE\psi_t$ satisfies \eqref{Repsit} and note that, by  \eqref{psipm1} and condition \textbf{A}, the term
$t\psi^U(\rho_t)$, $t\in (0,1]$,
involved in the right hand side of \eqref{Repsit}, is bounded. In addition, we have by \eqref{psipm1} and condition~\textbf{A} the inequality
$\RE\psi(\xi)\geq {1-\cos 1\over \beta}\psi^U(\xi)$.
Using these observations   we finally get  the estimate
$$
\Big|\frac{\partial^k}{\partial x^k} \bar p_t(x)\Big|\leq  c_2 e^{-\rho_t |x|}\left(\rho_t^k I_0\left(t, {1-\cos 1\over \beta e}\right)+I_k\left(t, {1-\cos 1\over \beta e}\right)\right),
$$
where
$$
I_k(t,\lambda):=\int_\real |y|^k e^{-\lambda t \psi^U(y)}dy,\quad k\geq 0,
$$
and  $c_2>0$ is some suitable constant.  The rest of the proof follows from Lemma \ref{l-aux2} below.

\end{proof}

\begin{lemma}\label{l-aux2} For every $k\geq 0$, $\lambda>0$, there exists a positive constant $c=c(k,\lambda)$ such that
$$
I_k(t, \lambda)\leq c \rho_t^{k+1},  \quad t\in (0,1].
$$
\end{lemma}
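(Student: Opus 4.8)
The plan is to eliminate the $t$-dependence by rescaling with $\rho_t^U$ and then to use the power-type growth of $\psi^U$ already recorded in the proofs of Lemma~\ref{C1} and Lemma~\ref{l-aux1}. Since $\psi^U$ is even, write $I_k(t,\lambda)=2\int_0^\infty y^k e^{-\lambda t\psi^U(y)}\,dy$ and substitute $y=\rho_t^U z$, which gives
\[
I_k(t,\lambda)=2(\rho_t^U)^{k+1}\int_0^\infty z^k e^{-\lambda t\psi^U(\rho_t^U z)}\,dz .
\]
By the middle identity in \eqref{id} we have $t\psi^U(\rho_t^U)=1$, so the exponent equals $-\lambda\,\psi^U(\rho_t^U z)/\psi^U(\rho_t^U)$; in this form all the $t$-dependence sits inside the ratio $\psi^U(\rho_t^U z)/\psi^U(\rho_t^U)$, and it remains to bound $J(t):=\int_0^\infty z^k e^{-\lambda\psi^U(\rho_t^U z)/\psi^U(\rho_t^U)}\,dz$ uniformly in $t\in(0,1]$.

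To estimate $J(t)$ I would split the integral at $z=1$. On $(0,1)$ the function $\psi^U$ is non-negative and non-decreasing on $[0,\infty)$, hence the exponential factor is at most $1$ and $\int_0^1 z^k e^{-\lambda\psi^U(\rho_t^U z)/\psi^U(\rho_t^U)}\,dz\le\int_0^1 z^k\,dz=\tfrac1{k+1}$. On $(1,\infty)$ I would invoke the growth bound $\psi^U(\xi_2)/\psi^U(\xi_1)\ge(\xi_2/\xi_1)^{2/\beta}$ for $\xi_2>\xi_1>0$ — this is exactly the inequality deduced from \eqref{psi+0} and used in the proof of Lemma~\ref{l-aux1}. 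With $\xi_2=\rho_t^U z$ and $\xi_1=\rho_t^U$ it yields $\psi^U(\rho_t^U z)/\psi^U(\rho_t^U)\ge z^{2/\beta}$ for $z\ge1$, so
\[
\int_1^\infty z^k e^{-\lambda\psi^U(\rho_t^U z)/\psi^U(\rho_t^U)}\,dz\le\int_1^\infty z^k e^{-\lambda z^{2/\beta}}\,dz=:C_0(k,\lambda,\beta)<\infty ,
\]
the last integral being finite because $2/\beta>0$. Hence $J(t)\le \tfrac1{k+1}+C_0(k,\lambda,\beta)$ for every $t\in(0,1]$, and therefore $I_k(t,\lambda)\le 2\big(\tfrac1{k+1}+C_0(k,\lambda,\beta)\big)(\rho_t^U)^{k+1}$.

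It then remains to replace $\rho_t^U$ by $\rho_t$: by Lemma~\ref{l-aux1} there is a constant $c_2$ with $\rho_t^U\le c_2\rho_t$ for $t\in(0,1]$, which gives the claim with $c=2c_2^{k+1}\big(\tfrac1{k+1}+C_0(k,\lambda,\beta)\big)$. I do not expect a genuine obstacle here; the only point requiring care is that the estimate be uniform in $t$, and this is built in automatically once one rescales by $\rho_t^U$, since the comparison $\psi^U(\rho_t^U z)\ge\psi^U(\rho_t^U)\,z^{2/\beta}$ holds with a constant independent of $t$. (One could work directly with $\rho_t$, using $\rho_t\asymp\rho_t^U$ together with \eqref{psipm1} and condition~\textbf{A}, but routing through $\rho_t^U$ keeps the exponent exactly homogeneous and is marginally cleaner.)
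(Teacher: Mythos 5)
Your proof is correct, and it is a cleaner route than the one the paper takes, though both ultimately rest on the same ingredient: the power-type lower growth of $\psi^U$ encoded in condition~\textbf{A}. You rescale by $\rho_t^U$, normalize with $t\psi^U(\rho_t^U)=1$, split at $z=1$, and for $z\ge 1$ invoke the ratio inequality $\psi^U(\xi_2)/\psi^U(\xi_1)\ge(\xi_2/\xi_1)^{2/\beta}$ (already recorded in the proof of Lemma~\ref{l-aux1}, where it is derived from \eqref{psi+0}). This gives the dominating integrand $z^k e^{-\lambda z^{2/\beta}}$, which is manifestly integrable since $2/\beta>0$. The paper instead first peels off the bounded region $|y|\le 1$ (using that $\rho_t$ is bounded below on $(0,1]$), passes to logarithmic variables $y=e^v$, splits at $v_t=\log\rho_t^L$, and, rather than quoting the ratio inequality, re-derives a weaker lower bound on the exponent by iterating the identity \eqref{theta2} twice together with condition~\textbf{A}, obtaining $t[\theta^U(v)-\theta^U(v_t)]\ge 2\beta^{-1}(v-v_t)+2\beta^{-2}(v-v_t)^2$ and hence a Gaussian-type dominating integrand in $w=v-v_t$. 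Your power-type bound, written in the variable $w$, reads $t\theta^U(v)\ge e^{2w/\beta}$, which strictly dominates the paper's quadratic polynomial $1+2\beta^{-1}w+2\beta^{-2}w^2$; so you get faster decay, though either suffices for finiteness. Your choice of splitting at $z=1$ (i.e.\ $y=\rho_t^U$) also avoids the extra remark that $\rho_t$ is bounded below for $t\in(0,1]$. In short: same mechanism, streamlined execution, with the reuse of the already-proved inequality from Lemma~\ref{l-aux1} being the main economy. Nothing is missing; the only point to state explicitly (which you do in substance) is that $\psi^U$ is even, so that restricting to $y>0$ and doubling is legitimate, and that $\rho_t^U>0$ is well defined, which follows from $\psi^U(0)=0$, continuity, and $\psi^U(\xi)\to\mu(\Re)=\infty$.
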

\begin{proof} Clearly, the function $\rho_t$, $t\in (0,1]$, is bounded from  below by some positive constant. Since the function
$$
I_k^0(t,\lambda):=\int_{-1}^1 |y|^k e^{-\lambda t \psi^U(y)}dy,\quad t\in (0,1],
$$
is bounded, it remains to prove
$$
I_k^+(t, \lambda)\leq c \rho_t^{k+1},\quad I_k^-(t, \lambda)\leq c\rho_t^{k+1} \quad t\in (0,1],
$$
where
$$
I_k^+(t,\lambda):=\int_1^{\infty} y^k e^{-\lambda t \psi^U(y)}dy,\quad I_k^-(t,\lambda):=\int_{-\infty}^{-1} (-y)^k e^{-\lambda t \psi^U(y)}dy.
$$
We prove the required relation for $I_k^+(t,\lambda)$, the case of $I_k^-(t,\lambda)$ is completely analogous.

\noindent Making the change of variables $y=e^v$, we get
$$
I_k^+(t, \lambda)=\int_0^\infty e^{(k+1)v-t\lambda \theta^U(v)}\,dv,
$$
see the proof of Lemma \ref{C1} for the definition of $\theta^U$ and other auxiliary functions. Take $v_t=\log \rho_t^L$. Since  $\theta^U$ is non-negative, we have
\begin{align*}
I_k^+(t, \lambda)& = (\rho_t^L)^{k+1}\int_0^\infty e^{(k+1)(v-v_t)-t\lambda (\theta^U(v)-\theta^U(v_t)) -t\lambda\theta^U(v_t)}\, dv.
\\&
\leq (\rho_t^L)^{k+1}\int_0^\infty e^{(k+1)(v-v_t)-t\lambda (\theta^U(v)-\theta^U(v_t))}\, dv.
\end{align*}
The function $\theta^U$ is non-decreasing, hence
$$
\int_0^{v_t} e^{(k+1)(v-v_t)-t\lambda (\theta^U(v)-\theta^U(v_t))}\, dv\leq \int_0^{v_t} e^{(k+1)(v-v_t)}\,dv\leq {1\over k+1}.
$$
On the other hand, we have from the last identity in (\ref{id}) and the trivial relation $\psi^L\leq \psi^U$ that $t\theta^U(v_t)\geq 1$.
Then, using the monotonicity of   $\theta^U$, condition \textbf{A}, and
 applying twice \eqref{theta2},  we get
    \begin{align*}
    t[\theta^U(v)-\theta^U({v_t})]&=2t\int_{v_t}^v\theta^L(r)dr
    \\&
    \geq 2t\beta^{-1}\int_{v_t}^v\theta^U(r)dr
    =
    2t\beta^{-1}\theta^U({v_t})(v-v_t)+4t\beta^{-1} \int_{v_t}^v\int_{v_t}^r\theta^L(s)\,ds dr
    \\&
    \geq 2t\beta^{-1}\theta^U({v_t})(v-v_t)+ 4t\beta^{-2}\int_{v_t}^v\int_{v_t}^r\theta^U({s})\,ds\, dr
      \\&
    \geq 2\beta^{-1}(v-v_t)+ 2\beta^{-2} (v-v_t)^2.
    \end{align*}
Therefore,
$$
\int_{v_t}^\infty e^{(k+1)(v-v_t)-t\lambda \theta^U(v)+t\lambda\theta^U(v_t)}\, dv\leq \int_0^\infty e^{(k+1)w-(2\lambda/\beta)w- (2\lambda/\beta^2)w^2}\, dw<\infty.
$$
Combining the above inequalities and using Lemma \ref{l-aux1}, we obtain  the required statement.
\end{proof}

In the symmetric case  we get essentially the analogue of Lemma~\ref{ptbar}, but with a slightly different upper bound. Note that in this case we have $a_t=x_t=0$.
\begin{lemma}\label{ptbar2}
Let $\psi$ be real valued. Then for any $k\geq 0$, $t_0>0$,  there exist $b_1, b_2>0$ such that
\begin{equation}
\Big| \frac{\partial^k }{\partial x^k} \bar p_t (x)\Big| \leq b_1 \rho_t^{k+1} e^{-b_2\rho_t|x|\ln (\rho_t |x|+1)}, \quad x\in \real, \quad t\in (0,t_0]. \label{ptbar-eq-sym}
\end{equation}
\end{lemma}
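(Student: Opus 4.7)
The plan is to repeat the complex-analytic contour-shift argument of Lemma~\ref{ptbar}, but to exploit the symmetry of $\mu$ in order to shift the contour by a much larger imaginary amount $\eta$ -- of order $\rho_t\ln(1+\rho_t|x|)$ rather than only $\rho_t$. This will produce the additional $\ln(\rho_t|x|+1)$ factor in the exponent of \eqref{ptbar-eq-sym}.

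The algebraic heart of the improvement is a symmetrization lower bound for $\RE\psi_t(y+i\eta)$. Since $\mu$ is symmetric, the drift correction $i\xi u$ in \eqref{psit} integrates to zero, so
\[
\psi_t(z)=t\int_{|\rho_t u|\leq 1}(1-e^{iuz})\mu(du)
\]
extends to an entire function. Writing $z=y+i\eta$ and using the substitution $u\mapsto -u$ to replace $e^{-u\eta}$ by $\cosh(u\eta)$, we get
\[
\RE\psi_t(y+i\eta)=t\int_{|\rho_t u|\leq 1}\bigl(1-\cosh(u\eta)\cos(uy)\bigr)\mu(du).
\]
The decomposition $1-\cosh\cos=(1-\cosh)+\cosh(1-\cos)$ together with $\cosh\geq 1$ yields the clean bound
\[
\RE\psi_t(y+i\eta)\geq \Psi_t(\eta)+\RE\psi_t(y),\qquad \Psi_t(\eta):=\psi_t(i\eta)\leq 0,
\]
valid for \emph{arbitrary} $y,\eta\in\Re$, in contrast to the restriction $|\eta|\leq \rho_t$ imposed in the proof of Lemma~\ref{ptbar}.

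With this bound in hand, the Cauchy-theorem argument of Lemma~\ref{ptbar} carries over for any $\eta\in\Re$: the contributions from the vertical sides $\Gamma^M_{3,4}$ still vanish as $M\to\infty$ because $\RE\psi_t(M)\to\infty$ by Lemma~\ref{C1} and \eqref{Repsit}. Passing to the limit gives
\[
\Bigl|\tfrac{\prt^k}{\prt x^k}\bar p_t(x)\Bigr|\leq \tfrac{1}{2\pi}\,e^{x\eta-\Psi_t(\eta)}\int_\Re(|\eta|+|y|)^k e^{-\RE\psi_t(y)}\,dy.
\]

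It remains to optimize $\eta$. Set $\eta=-\rho_t\lambda\,\mathrm{sign}(x)$ with $\lambda\geq 0$. Applying the elementary inequality $\cosh z-1\leq \tfrac{z^2}{2}e^{|z|}$ to $z=\eta u$ on the support $|\rho_t u|\leq 1$ gives
\[
-\Psi_t(\eta)\leq \tfrac{\lambda^2 e^\lambda}{2}\,t\psi^L(\rho_t)\leq C\lambda^2 e^\lambda,
\]
the second step using \eqref{psipm1}, condition~\textbf{A}, and \eqref{id}. Hence
\[
x\eta-\Psi_t(\eta)\leq -\rho_t|x|\lambda+C\lambda^2 e^\lambda.
\]
Choosing $\lambda=c\ln(1+\rho_t|x|)$ with a sufficiently small $c>0$ makes the linear term dominate and produces $x\eta-\Psi_t(\eta)\leq -b_2\rho_t|x|\ln(1+\rho_t|x|)+\mathrm{const}$. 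The polynomial prefactor $(|\eta|+|y|)^k$ contributes at most $\rho_t^k\ln^k(1+\rho_t|x|)$, absorbed into the exponential after a slight reduction of $b_2$; the $y$-integral is bounded by Lemma~\ref{l-aux2} exactly as in the proof of Lemma~\ref{ptbar}, giving \eqref{ptbar-eq-sym}. The main obstacle is isolating the clean symmetrization bound $\RE\psi_t(y+i\eta)\geq\Psi_t(\eta)+\RE\psi_t(y)$ -- this is the genuine gain from symmetry, and it is precisely what permits the unrestricted contour shift; the optimization of $\lambda$ and the bookkeeping are then routine.
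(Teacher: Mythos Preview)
Your proof is correct and follows essentially the same route as the paper's: both shift the contour by an unrestricted $\eta$, use symmetry to obtain the key inequality $\RE\psi_t(y+i\eta)\geq\Psi_t(\eta)+\RE\psi_t(y)$ via the decomposition $1-\cosh\cdot\cos=(1-\cosh)+\cosh(1-\cos)$, and then optimize in $\eta$. The only cosmetic differences are that you bound $-\Psi_t(\eta)$ via $\cosh z-1\leq\tfrac{z^2}{2}e^{|z|}$ and plug in an explicit $\lambda=c\ln(1+\rho_t|x|)$, whereas the paper bounds $-\Psi_t(\eta)\leq C(\cosh(\eta/\rho_t)-1)$ using monotonicity of $x^{-2}(\cosh x-1)$ and then minimizes $x\eta+c_3\cosh(\eta/\rho_t)$ over $\eta$; these lead to the same estimate.
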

\begin{proof}
The proof is essentially the same as in the non-symmetric case. First, we change the integration contour  in the same way as  in the proof of Lemma~\ref{ptbar}.  Unlike the non-symmetric case, while doing this we  do not pose  any additional assumptions on  $\eta\in \real$. The functions $H(t,x,z)$,  $\Psi_t(\eta)$ and  $C(t,x,\eta)$ have the same meaning as before, we only need to keep in mind that now $\psi_t(\xi)$ admits  the representation
$$
\psi_t(\xi)= t  \int_{|\rho_t u|\leq 1} (1-\cos (\xi u)) \mu(du).
$$
We have
\begin{equation}
\begin{split}
\RE  H(t,x,M+i\eta)&= x\eta+ t\int_{|\rho_tu|\leq 1}
(\cosh (\eta u)-1)\mu(du)- t\int_{|\rho_t u| \leq 1} \cosh (\eta u)[1-\cos (yu)]\mu(du)
\\&
\leq x\eta- \psi_t(i\eta) -\psi_t(M).
\end{split}
\end{equation}
Thus,
$$
\left| \int_{\Gamma_i^M} (-iz)^k e^{H(t,x,z)} dz\right| \leq
(\eta^2 +M^2)^{k/2} e^{C(t,x,\eta)-t\psi_t(M)},\quad i=3,4.
$$
Therefore, applying the Cauchy theorem to the same contour as in the proof of Lemma~\ref{ptbar} and letting $M\to\infty$, we derive
\begin{equation}
\Big|\frac{\partial^k }{\partial x^k} \bar p_t(x)\Big|\leq  \frac{1}{2\pi} e^{\eta x - \psi_t(i\eta)}  \int_\real (|y|+|\eta|)^ke^{-t\psi_t(y)}dy.
\end{equation}
By Lemma~\ref{l-aux2} we get
$$
\int_\real (|\eta|+|y|)^k e^{-\psi_t(y)} dy\leq c_1 (|\eta|^k\rho_t + \rho_t^{k+1} ),
$$
where $c_1>0$ is some constant, depending on $k$. Since the L\'evy measure $\mu$ is symmetric, we have
\begin{align*}
-\psi_t(i\eta)&= t\int_{|\rho_t u|<1}[\cosh(\eta u)-1]\mu(du)=t\int_{|\rho_t u|<1}(\eta u)^2\vartheta(\eta u)\mu(du)
\\&
\leq t\vartheta(\eta/\rho_t)\int_{|\rho_t u|<1}(\eta u)^2\mu(du)
=t(\eta/\rho_t)^2
\vartheta(\eta/\rho_t)\psi^L(\rho_t)
\\&
=\cosh(\eta/\rho_t)-1,
\end{align*}
where $\vartheta(x)=x^{-2}[\cosh x-1]$, and we used that $\vartheta$ is even and strictly increasing on $(0,\infty)$.

\noindent Thus,  by Lemma~\ref{l-aux2}, we obtain
\begin{equation}
\begin{split}
\Big|\frac{\partial^k}{\partial x^k} \overline{p}_t(x)\Big|&\leq c_1 e^{x\eta +  \cosh(\eta /\rho_t)} (|\eta|^k \rho_t + \rho_t^{k+1})
\\&
\leq c_2 \rho_t^{k+1}e^{x\eta +  \cosh(\eta /\rho_t)+k \ln (\eta/\rho_t)}
\\&
\leq c_2 \rho_t^{k+1}e^{x\eta + c_3 \cosh(\eta /\rho_t)}.
\label{dpdx2}
\end{split}
\end{equation}
The upper bound follows by minimizing the right-hand side in $\eta$ over all $\eta\in \real$.
\end{proof}

\subsection{Lower bounds}  To obtain the lower bound, observe  that the upper bound for $\bar p_t(x)$
implies the existence of $L>0$ such that
$$
\int_{|\rho_t x|\leq L}\bar p_t(x)\, dx\geq 1/2.
$$
Then
\begin{equation}
1/2 \leq  \int_{|\rho_t x|\leq L}p_t(x)\, dx \leq \tfrac{2L}{\rho_t}\max_{x\in \real} \bar  p_t(x), \label{low1}
\end{equation}
which proves the lower estimate in  Theorem~\ref{t-main}. Using additionally  that
$\big|{\prt\over \prt x} \bar p_t(x)\big| \leq b \rho_t^2$ with some $b>0$ by Lemma \ref{ptbar}, we obtain by the Taylor expansion argument
$$
\bar p_t (x) \geq \bar p_t(x_t) - \Big|\int_{x_t}^x \frac{\partial}{\partial x} \bar p_t(y)dy\Big|\geq \tfrac{1}{4L} \rho_t - c_2 \rho_t^2.
$$
\noindent  This together with \eqref{conv1}  implies the statement II  of Theorem~\ref{t-main1}, because condition \textbf{A} provides that the family $\Lambda_t(\Re)$, $t\in(0,1]$, is bounded.

\subsection{Proof of Theorem~\ref{Klup}}

I.    Put
$$
H(du):= e^{-C} \sum_{n=1}^\infty \frac{C^n G^{*n}(du)}{n!}.
$$
By \eqref{dens02} we can rewrite the upper estimate for $p_t(x)$ as
\begin{equation}
p_t(x)\leq \rho_t f_{upper}(x\rho_t) + c_1 \rho_t \int_1^\infty
\left(f_{upper}(x\rho_t -u) +f_{upper}(x\rho_t +u)\right) H(du). \label{dens20}
\end{equation}
Since $G$ is by our assumption sub-exponential, we have  by Theorem~3 from \cite{EGV79} that the distribution function $H$ is sub-exponential as well, and
$$
\lim_{x\to \infty} \frac{1-G(x)}{1-H(x)}=C,
$$
which implies for $x$ large enough
$c_2 (1-H(x))\leq 1-G(x)\leq c_3(1-H(x))$,
where $c_2,c_3$ are some positive constants. Then for any positive function $f$ vanishing at $\infty$ we have
\begin{align*}
\int_1^\infty f(u)H(du)& = \int_1^\infty (1-H(u))df(u) +f(1)
\asymp \int_1^\infty (1-G(u))df(u) +f(1)
\\&
=\int_1^\infty f(u) G(du).
\end{align*}
Thus, we can write
\begin{equation}
p_t(x)\leq \rho_t f_{upper}(x\rho_t) + c_4 \rho_t \int_1^\infty
\left(f_{upper}(x\rho_t -u) +f_{upper}(x\rho_t +u)\right) G(du). \label{dens201}
\end{equation}
Let us estimate the right-hand side for  $x>0$, the case $x<0$ is completely analogous. For $x>0$ and $v>0$ we have $f_{upper}(x+v) \leq f_{upper}(x)$, which gives
$$
\int_1^\infty f_{upper}(x+v)dG(v) \leq f_{upper}(x).
$$
Integration by parts gives us for the other term in \eqref{dens201}
\begin{align*}
\int_1^\infty f_{upper}(x-v)dG(v) &= -\int_1^\infty G(v) df_{upper}(x-v)
\\&
\leq \int_1^\infty (1-G(v)) |f_{upper}'(x-v)| dv+ f_{upper}(x-1)
\\&
= \int_{-\infty}^{x-1}(1-G(x-v))|f_{upper}'(v)|dv+ f_{upper}(x-1),
\end{align*}
(here we understand the derivative in the sense of the derivative of an absolutely continuous function). Since $G$ is sub-exponential and $|f_{upper}'|$ is integrable,
$$
\lim_{x\to \infty }  \int_{-\infty}^{x-1} \frac{1-G(x-v)}{1-G(x)}|f_{upper}'(v)|dv = \int_\real |f_{upper}'(v)|dv.
$$
Thus, for $x$ large enough we have
$$
\int_1^\infty (1-G(v)) |f_{upper}'(x-v)| dv\leq c_5 (1-G(x)).
$$
Finally, by the comment after Definition~\ref{sub} we get \eqref{bell1}.

II.  Denote
$$
h_t(u):= \sum_{n=1}^\infty\frac{m_t^{*n}(u)}{n!}.
$$
By \eqref{dens01},
$h_t(u) \leq h(u)$,
where
$h(u) := e^{-C} \sum_{n=1}^\infty\frac{C^n g^{*n}(u)}{n!}$
 is sub-exponential as well, and
\begin{equation}
\lim_{x\to \infty} \frac{h(y)}{g(y)} =C.\label{dens21}
\end{equation}
Therefore,
\begin{equation}
p_t(x)\leq \rho_t f_{upper}(x\rho_t)+ c_1\rho_t \int_1^\infty \left(f_{upper}(x\rho_t -u) +f_{upper}(x\rho_t +u)\right) g(y)dy.\label{dens10}
\end{equation}
By the comment after Definition~\ref{sub}, the function $g$ decays as $x\to \infty$ slower than any exponential function, implying
$$
 \int_1^\infty\left( f_{upper}(x\rho_t -y) +f_{upper}(x\rho_t +y)\right) g(y)dy \leq c_2 g(|x\rho_t|).
 $$
Thus, we can estimate the right-hand side of \eqref{dens10} as
$$
c_3\rho_t \left( f_{upper}(x\rho_t)+ g(|x\rho_t|)\right),
$$
 which finally gives \eqref{bell2}.
\qed

\section{Examples}\label{exa}
\begin{example}\label{exa1}

 Let $Z_t$ be a symmetric $\alpha$-stable process, $\alpha\in (0,2)$. In this case $\psi(\xi)=|\xi|^\alpha$, $\mu(du)=C_\alpha |u|^{-\alpha-1}du$, and one can easily verify that condition \textbf{A} is satisfied.

 In this case $\rho_t=t^{-1/\alpha}$. Because the L\'evy measure possesses the density $m(y)=C_\alpha |y|^{-1-\alpha}$,  we are in the situation of Theorem~\ref{Klup} II with  $g(u)=\alpha^{-1}u^{-1-\alpha}\I_{u\geq1}$,  which is obviously a sub-exponential density. Applying this theorem,  we arrive at the well-known upper estimate  for  the symmetric $\alpha$-stable distribution density:
 $$
 p_t(x)\leq c_1 t^{-1/\alpha} \left(e^{-b_2 t^{-1/\alpha}|x|\ln (t^{-1/\alpha}|x|+1)}+ \Big(t^{-1/\alpha}|x|\Big)^{-\alpha-1} \I_{t^{-1/\alpha}|x|\geq 1} \right) \leq c_2 t^{-1/\alpha}\wedge \frac{t}{|x|^{1+\alpha}}.
$$
In addition, it is straightforward to verify that the lower compound kernel estimate from Theorem~\ref{t-main1}  now provides the similar  lower bound
$$
p_t(x)\geq c_3 t^{-1/\alpha}\wedge \frac{t}{|x|^{1+\alpha}}
$$
with some positive constant $c_3$. In other words, using our main Theorem \ref{t-main1} we can re-establish the well-known two-sided estimate for
the symmetric $\alpha$-stable distribution density:
\begin{equation}\label{eq-exa1}
p_t(x)\asymp t^{-1/\alpha}f(t^{-1/\alpha}x), \quad (t, x)\in (0,\infty)\times \Re,
\end{equation}
with
\begin{equation}\label{eq-exa11}
f(x)=1\wedge |x|^{-\alpha-1}.
\end{equation}
\end{example}
\begin{example}\label{exa2}  \emph{(a)} Consider  a L\'evy process with the discrete L\'evy measure
$$
\mu(dy)=\sum_{n=-\infty}^\infty 2^{n\gamma}\Big(\delta_{2^{-n\upsilon}}(dy)+\delta_{-2^{-n\upsilon}}(dy)\Big),
$$
where  $\upsilon>0$, $0<\gamma<2\upsilon$. Straightforward calculation shows that in this case \eqref{orey} holds true with $\alpha=\gamma/\upsilon$. In particular, condition \textbf{A} is satisfied  and $\rho_t \asymp t^{-1/\alpha}$. This means that there exists $q>0$ such that for every $t\in (0,1]$ the inequality
$n\leq  n_0(t):=\left[{1\over \gamma}\log_2{1\over t}-q\right]$
implies
$2^{-n\upsilon}\rho_t>1$.
Consequently,
$$
\Lambda_t(dy)\geq t\sum_{n\leq n_0(t) } 2^{n\gamma}\Big(\delta_{2^{-n\upsilon}}(dy)+\delta_{-2^{-n\upsilon}}(dy)\Big),
$$
and, taking into account that in this case $a_t=x_t=0$,   one has by Theorem~\ref{t-main1}.II that for some positive constant $c$
$$
p_t(2^{-n\upsilon})\geq ct\rho_t 2^{n\gamma}, \quad n\leq n_0(t).
$$
Then every function $f_t(x)$, monotonous with respect to  $x$ on $\real^+$ and satisfying \eqref{bell}, should satisfy
$$
f_t(x)\geq ct\rho_t 2^{n\gamma}, \quad x\in (2^{-n\upsilon}, 2^{-(n-1)\upsilon}], \quad  n\leq n_0(t).
$$
It is easy to verify that when  $\gamma\leq \upsilon$ (which is equivalent to $\alpha\leq 1$) one has
$$
t^{1-1/\alpha}\sum_{n\leq n_0(t) } 2^{n\gamma} 2^{-n\upsilon}\to \infty, \quad t\to 0+.
$$
This gives $\int_{\Re} f_t(x)dx\to \infty$ as $t\to 0+$,  which  shows that in this case  any estimate of the form \eqref{bell} with a ``bell-like function'' $f_t(x)$ would be  extremely inexact.

\emph{(b)} Consider now the case $1<\alpha<2$.  Note that for $x>1$
\begin{align*}
 t\mu\Big(\{u: |\rho_t u|>x\}\Big) =2t \sum_{n \leq n(t,x)} 2^{\gamma n} \leq  Ct 2^{\tfrac{\gamma}{\upsilon} \log_2 (\rho_t/x)}
 = C x^{-\gamma/\upsilon}= Cx^{-\alpha},
\end{align*}
 where $n(t,x):= \tfrac{1}{\upsilon}\log_2 (\rho_t/|x|)$. Therefore, condition \eqref{dens02} of Theorem~\ref{Klup}.I holds true with $1-G(x)=x^{-\alpha}$, $x\geq 1$. By this theorem and Remark~\ref{rem1}, we have the following estimate for the respective transition probability density:
\begin{equation}
p_t(x) \leq c_1 t^{-1/\alpha}\Big(b_1 e^{-b_2 t^{-1/\alpha} |x|\ln (t^{-1/\alpha} |x|+1)}+   \Big(t^{-1/\alpha}|x|\Big)^{-\alpha} \I_{\{t^{-1/\alpha}|x|\geq 1\}} \Big)\leq c_2 t^{-1/\alpha}f(t^{-1/\alpha}x)\label{eq-exa2}
\end{equation}
with
\begin{equation}
f(x)=1\wedge |x|^{-\alpha}.\label{eq-exa21}
\end{equation}
This upper bound looks very similar the one from (\ref{eq-exa1}), with the notable difference in the functions  $f$ in (\ref{eq-exa11}) and (\ref{eq-exa21}). This difference appears because in the previous example we have used the version of Theorem \ref{Klup} based on sub-exponential densities, while in the current example we had to use another version of this theorem which deals with sub-exponential distribution  functions. Note that (\ref{eq-exa2}) is precise in the sense that
\begin{equation}
p_t(x)\geq c_2 t^{-1/\alpha}f(t^{-1/\alpha}x), \quad t>0, \quad |x|\in \{2^{-n\upsilon}, n\geq 1\}\label{eq-exa3}
\end{equation}
with $f$ given by (\ref{eq-exa21}). The latter inequality can be  verified easily using the lower bound in Theorem \ref{t-main1}. On the other hand, one can hardly expect that (\ref{eq-exa3}) can be extended to hold true for all $x\in \Re$, which would be  a complete analogue of the lower bound in \eqref{eq-exa1}.

 \end{example}

 The third example shows that condition \textbf{A} is comparatively mild. In particular, it  allows some kind of ``slow oscillations''  for the characteristic exponent.
\begin{example}\label{exa3}
Consider the function $\alpha: [0,\infty)\to [\alpha_-,\alpha_+]\subset (0,2)$ such that
    \begin{equation}
    v\alpha'(v)\to 0\quad \text{ as}\quad  v\to \infty. \label{v1}
    \end{equation}
Put
   \begin{equation}
    \theta^-(v):=\int_0^v e^{\alpha(w)w}dw, \quad \theta^+(v):=2\int_0^v \theta^-(w)dw.\label{v11}
    \end{equation}
Using  the l'Hospital rule  we get by \eqref{v1}
    \begin{equation}
    \theta^-(v)\sim \frac{e^{\alpha(v)v}}{\alpha(v)}\quad \text{as}\quad v\to\infty. \label{v2}
    \end{equation}
Then the function $\phi(\xi):=\theta^-(0\vee \ln \xi)$ satisfies

a)  $\frac{d}{d\xi}\left( \frac{\phi(\xi)}{\xi^2}\right)\leq 0$  (which follows from \eqref{v11} and \eqref{v2});

b)  $\frac{\phi(\xi)}{\xi^2}\to 0$ as $\xi\to\infty$.

Thus, there exists  a L\'evy measure $\mu(du)=m(u)du$  such that
    $$
    \phi(\xi)=\int_{|\xi u|\leq 1} (\xi u)^2 m(u)du,
    $$
or, in other words, $\phi(\xi)\equiv \psi^L(\xi)$, where $\psi^L(\xi)$ is the lower function, corresponding to $\mu$. Then the corresponding upper function is
 $\psi^U(\xi)\equiv  \theta^+(0\vee \ln \xi)$. Similarly to \eqref{v2} we get
    \begin{equation}
    \theta^+(v)\sim \frac{2e^{\alpha(v)v}}{\alpha^2(v)},\quad v\to\infty, \label{v3}
    \end{equation}
which implies together with \eqref{v2}
    $$
    \frac{\psi^U(\xi)}{\psi^L(\xi)}\sim \frac{2}{\alpha (\ln \xi)}, \quad \xi\to\infty.
    $$
Thus, $\psi$ satisfies condition \textbf{A}, and from \eqref{v2} and \eqref{v3} we get
    \begin{equation}
    \psi^U(\xi)\asymp \psi(\xi)\asymp \psi^L(\xi)\asymp |\xi|^{\alpha(\ln |\xi|)}, \quad |\xi|\to\infty.
    \end{equation}
In particular, for any $\alpha\in [\alpha_-,\alpha_+]$   there exists a sequence $\{\xi_{\alpha,n}$, $n\geq 1\}$, such that
    $$
    \xi_{\alpha, n}\to \infty, \quad \psi(\xi_{\alpha,n})\asymp (\xi_{\alpha,n})^\alpha, \quad n\to\infty.
    $$
Thus, by Theorem~\ref{t-main} for any $\alpha\in [\alpha_-,\alpha_+]$ there exists  a sequence $t_{\alpha,n}\to 0$  such that
    $$
    p_{t_{\alpha,n}}(0)\asymp (t_{\alpha,n})^{-1/\alpha}, \quad n\to\infty.
    $$
In this example, heuristically,    the asymptotic behaviour of the characteristic exponent $\psi$ may coincide with the one  of the characteristic exponents of $\alpha$-stable processes on various subsets of $\real^+$ having $\infty$ as their limit point. Here the index $\alpha$ depends on the subset, and may vary in the range $[\alpha_-,\alpha_+]$. Respectively, the asymptotic behaviour of the transition probability density as $t\to0+$ depends heavily on the particular subset of $\Re^+$  this behavior is considered on, and may be the same as for the  $\alpha$-stable processes with any $\alpha$  from the segment $[\alpha_-,\alpha_+]$.
\end{example}

\textbf{Acknowledgement.} The authors thank N.Jacob, R. Schilling  and A. Bendikov for their helpful comments and remarks. The first-names author gratefully acknowledges the Scholarship of NAS of Ukraine for young scientists (2009-2011), and the Scholarship of the President of Ukraine for young scientists (2011-2013).

\end{document}